\tikzset{>=stealth',
         cvertex/.style={circle,draw=black,inner sep=1pt,outer sep=3pt},
         vertex/.style={circle,fill=black,inner sep=1pt,outer sep=3pt},
         star/.style={circle,fill=yellow,inner sep=0.75pt,outer sep=0.75pt},
         tvertex/.style={inner sep=1pt,font=\scriptsize},
         gap/.style={inner sep=0.5pt,fill=white}}
\newtheorem{thm}{Theorem}[section]
\newtheorem{prop}[thm]{Proposition}
\newtheorem{lemma}[thm]{Lemma}
\newtheorem{defin}[thm]{Definition}
\newtheorem{cor}[thm]{Corollary}
\theoremstyle{definition} 
\newtheorem{example}[thm]{Example}
\newtheorem{remark}[thm]{Remark}
\newcommand{\dsg}[1]{\mathfrak{D}_{\mathbf{sg}}(#1)}
\newcommand{\stq}[1]{\underline{\Mod{#1}}}
\newcommand{\dsgaf}[1]{D^b(\mod{#1})/ K^b(\proj{#1})}
\newcommand{\Sstq}[1]{\tt{St}(\stq{R})}
\newcommand{\C}[1]{\mathbb{C}^{#1}}
\newcommand{\m}{\mathfrak{m}}
\newcommand{\s}[1]{\mathscr{#1}}
\renewcommand{\c}[1]{\mathcal{#1}}
\renewcommand{\u}[1]{\underline{#1}}
\newcommand{\w}[1]{\widetilde{#1}}
\renewcommand{\t}[1]{\textnormal{#1}}
\renewcommand{\tt}[1]{\mathtt{#1}}
\def\GL{\mathop{\rm GL}\nolimits}
\def\CM{\mathop{\rm CM}\nolimits}
\def\SCM{\mathop{\rm SCM}\nolimits}
\def\OCM{\mathop{\Omega{\rm CM}}\nolimits}
\def\mod{\mathop{\rm mod}\nolimits}
\def\coh{\mathop{\rm coh}\nolimits}
\def\Mod{\mathop{\rm Mod}\nolimits}
\def\proj{\mathop{\rm proj}\nolimits}
\def\pd{\mathop{\rm proj.dim}\nolimits}
\def\Hom{\mathop{\rm Hom}\nolimits}
\def\End{\mathop{\rm End}\nolimits}
\def\Ext{\mathop{\rm Ext}\nolimits}
\def\Tr{\mathop{\rm Tr}\nolimits}
\def\add{\mathop{\rm add}\nolimits}
\def\Spec{\mathop{\rm Spec}\nolimits}
\def\gl{\mathop{\rm gl.dim}\nolimits}
\def\AA{\mathop{\mathcal{A}}\nolimits}
\def\CC{\mathop{\mathcal{C}}\nolimits}
\def\Db{\mathop{\rm{D}^b}\nolimits}
\def\AA{\mathop{\mathcal{A}^{}}\nolimits}
\def\BB{\mathop{\mathcal{B}^{}}\nolimits}
\def\CC{\mathop{\mathcal{C}^{}}\nolimits}
\def\TT{\mathop{\mathcal{T}^{}}\nolimits}
\begin{document}
\title{\textsc{A New Triangulated Category for Rational Surface Singularities}}
\author{Osamu Iyama}
\address{Osamu Iyama\\ Graduate School of Mathematics\\ Nagoya University\\ Chikusa-ku, Nagoya, 464-8602, Japan}
\email{iyama@math.nagoya-u.ac.jp}
\author{Michael Wemyss}
\address{Michael Wemyss\\ Graduate School of Mathematics\\ Nagoya University\\ Chikusa-ku, Nagoya, 464-8602, Japan}
\thanks{The second author was supported by a JSPS Postdoctoral Fellowship.}
\subjclass{13C14, 16G70, 18E30.}
\email{wemyss.m@googlemail.com}
\begin{abstract}
In this paper we introduce a new triangulated category for rational surface singularities which in the non-Gorenstein case acts as a substitute for the stable category of matrix factorizations.  The category is formed as a stable quotient of the Frobenius category of special CM modules, and we classify the relatively projective-injective objects and thus describe the AR quiver of the quotient.  Connections to the corresponding reconstruction algebras are also discussed.
\end{abstract}
\maketitle
\parindent 20pt
\parskip 0pt

\section{Introduction}
The theory of almost split sequences first entered the world of quotient singularities through the work of Auslander \cite{Auslander_rational}.  Rather than interpreting the McKay correspondence for finite subgroups $G\leq \t{SL}(2,\C{})$ in terms of representations of $G$, he instead viewed the representations as Cohen--Macaulay (=CM) $\C{}[[x,y]]^G$-modules and showed that the Auslander--Reiten (=AR) quiver coincides with the McKay quiver, thus linking with the geometry through the dual graph of the minimal resolution.

There is a benefit to this viewpoint, since considering representations as modules we may sum them together (without multiplicity) and consider their endomorphism ring; this is Morita equivalent to the skew group ring $\C{}[x,y]\# G$.  Through projectivization (\cite{A},\cite{ARS}) the theory of almost split sequences can be used to gain homological insight into the structure of the endomorphism ring, and furthermore it can be used to recover the relations on the McKay quiver which yields a presentation of the algebra \cite{RV}.

Recently \cite{Wemyss_GL2} it was realized that for quotients by groups not inside $\t{SL}(2,\C{})$ the skew group ring is far too large, and instead we should sum less CM modules together and consider this endomorphism ring instead.  The modules that we sum are the special CM modules, and the resulting endomorphism ring is called a reconstruction algebra.  These algebras are in fact defined for all rational surface singularities (not just quotients), are always derived equivalent to the minimal resolution \cite[\S 2 and Lemma 3.2]{Wemyss_GL2} and have global dimension 2 or 3 \cite[2.10]{Iyama_Wemyss_specials}.  However the main difference between this new situation and the classical case is that the reconstruction algebra is very non-symmetrical and so for example writing down the relations is a much more delicate and difficult task.

We are thus motivated to study $\SCM(R)$ (or dually $\OCM(R)$), the category of special CM modules (respectively first syzygies of CM modules), from the viewpoint of relative AR theory \cite{AS} to try and gain an insight into this problem.  This short paper is dedicated to its study, and other related issues.

In this paper we show that $\SCM(R)$ admits a Frobenius structure and prove that the indecomposable relatively projective objects in $\SCM(R)$ are precisely $R$ together with those special CM modules which correspond to non-$(-2)$ curves in the dual graph of the minimal resolution.  Geometrically this means that the quotient $\u{\u{\SCM}}(R)$ only `sees' the crepant divisors.  Note that it is certainly possible that all special CM modules are relatively projective, in which case the quotient category is zero.  However there is still enough information to prove some results, for example that at all vertices in a reconstruction algebra corresponding to a $(-2)$ curve in the minimal resolution, there is only one relation which is a cycle at that vertex and further it is (locally) a preprojective relation. 

We remark that our triangulated category is a more manageable version of the rather large triangulated category of singularities $\dsg{R}=\dsgaf{R}$ which is well known in the Gorenstein case to coincide with $\underline{\CM}(R)$ \cite{Buch1}.  Note that our category is definitely not equivalent to $\dsg{R}$ since the category $\dsg{R}$ is always non-zero if $R$ is singular, but $\u{\u{\SCM}}(R)$ is zero if there are no crepant divisors in the minimal resolution. Furthermore our category $\u{\u{\SCM}}(R)$ is always Krull-Schmidt, a property not enjoyed by $\dsg{R}$ in the case when $R$ is not Gorenstein. It would be interesting to see if there are indeed any connections between the two categories.

\section{Conventions and Background}

Throughout this paper we let $R$ be a complete local normal domain of dimension two over an algebraically closed field of characteristic zero which furthermore is a rational singularity.  We denote $\CM(R)$ to be the category of Cohen-Macaulay (=CM) $R$-modules and by $\OCM(R)$ the category of first syzygies of CM $R$-modules. There is a duality
\[
(-)^*:=\Hom_R(-,R):\CM(R)\to \CM(R).
\]
Both $\CM(R)$ and $\OCM(R)$ are Krull--Schmidt categories since $R$ is complete.

We always denote the minimal resolution of $\Spec R$ by $\pi:\w{X}\rightarrow \Spec R$ and the irreducible exceptional curves by $\{ E_i\}_{i\in I}$.  For a sheaf $\s{F}$ on $\w{X}$, we denote $\s{F}^{\vee}$ to be the sheaf $\s{H}om_{\w{X}}(\s{F},\s{O}_{\w{X}})$
and we denote $\mathbf{T}(\s{F})$ to be the torsion subsheaf of
$\s{F}$, i.e. the kernel of the natural map $\s{F}\to\s{F}^{\vee\vee}$.  The following important lemma--definition is due to Esnault.
\begin{lemma}[{\cite[2.2]{Esnault_full}}]
Let $\s{F}$ be a sheaf on $\w{X}$.  There exists a CM $R$-module $M$ such that $\s{F}\cong\pi^\ast M/\mathbf{T}(\pi^\ast M)$ if and only if the following conditions are satisfied. \\
\t{(i)} $\s{F}$ is locally free.\\
\t{(ii)} $\s{F}$ is generated by global sections.\\
\t{(iii)} $H^1(\w{X},\s{F}^\vee\otimes \omega_{\w{X}})=0$
where $\omega_{\w{X}}$ is the canonical sheaf.\\
In this case $\s{F}\cong \pi^\ast M/\mathbf{T}(\pi^\ast M)$ is called a \emph{full sheaf}.  Moreover, for a full sheaf $\s{F}\cong \pi^\ast M/\mathbf{T}(\pi^\ast M)$ one has $\pi_\ast(\s{F})\cong M$ and $\pi_\ast( \s{F}^\vee)\cong M^*$.
\end{lemma}
To ease notation, for any CM module $M$ of $R$ we denote $\c{M}:=\pi^*M/\mathbf{T}(\pi^\ast M)$ to be the corresponding full sheaf on $\w{X}$.  In the following theorem--definition, if $M\in\mod R$ we denote ${\bf T}(M)$ to be the torsion submodule of $M$, i.e. the kernel of the natural map $M\to M^{**}$. 
\begin{thm}\label{characterization of SCM}
For $M\in\CM(R)$, the following conditions are equivalent.
\begin{itemize}
\item[(1)] $\Ext^1_{\w{X}}(\c{M},\s{O})=0$,
\item[(2)] $(M\otimes_R\omega_R)/{\bf T}(M\otimes_R\omega_R)\in\CM(R)$,
\item[(3)] $\Ext^2_R(\Tr M,\omega_R)=0$,
\item[(4)] $\Omega\Tr M\in\CM(R)$,
\item[(5)] $\Ext^1_R(M,R)=0$,
\item[(6)] $M^*\in\OCM(R)$,
\item[(7)] $\Omega M\cong M^*$ up to free summands.
\end{itemize}
We call such a module $M$ a \emph{special CM module}.  We denote $\SCM(R)$ to be the category of special CM modules.
\end{thm}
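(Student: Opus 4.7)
The plan is to handle the seven conditions in two groups: the algebraic ones (3)--(7), which I would link by dualising the minimal presentation of $M$ and invoking local duality, and the geometric ones (1) and (2), which I would connect to the algebraic group via the Auslander exact sequence and Grothendieck duality for $\pi$. Two standing facts are used throughout: since $R$ is two-dimensional, normal and CM, every CM module is reflexive ($M^{**}\cong M$); and since $R$ has isolated singularity, every CM module $M$ is locally free outside $\m$, so $\Ext^i_R(M,R)$ has finite length for all $i\ge 1$.

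For (3)$\Leftrightarrow$(4), local duality gives $\Ext^2_R(\Tr M,\omega)\cong H^0_\m(\Tr M)^\vee$, so (3) translates to $\depth_R\Tr M\ge 1$; applying the depth lemma to $0\to\Omega\Tr M\to P_1^*\to\Tr M\to 0$ (with $P_1^*$ free) gives the equivalence with (4). For (4)$\Leftrightarrow$(5), the differential $P_1\to P_0$ of the minimal presentation factors as $P_1\twoheadrightarrow\Omega M\hookrightarrow P_0$, so dualising $0\to\Omega M\to P_0\to M\to 0$ yields
$$0\to M^*\to P_0^*\to(\Omega M)^*\to\Ext^1_R(M,R)\to 0,$$
in which the image of $P_0^*\to(\Omega M)^*$ is precisely $\Omega\Tr M$. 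Extracting the short exact sequence
$$0\to\Omega\Tr M\to(\Omega M)^*\to\Ext^1_R(M,R)\to 0,$$
with $(\Omega M)^*\in\CM(R)$ and $\Ext^1_R(M,R)$ of finite length, a depth count forces $\Omega\Tr M\in\CM(R)$ if and only if $\Ext^1_R(M,R)=0$. For (5)$\Leftrightarrow$(6) and (5)$\Leftrightarrow$(7): under (5) the four-term sequence collapses to $0\to M^*\to P_0^*\to(\Omega M)^*\to 0$, displaying $M^*$ as a first syzygy of the CM module $(\Omega M)^*$, hence $M^*\in\OCM(R)$; dualising this short exact sequence via $(-)^*$ and using CM reflexivity recovers the original $0\to\Omega M\to P_0\to M\to 0$, and Krull-Schmidt uniqueness combined with the identification $(\Omega M)^*\cong\Omega\Tr M$ then pairs $\Omega M$ with $M^*$ up to free summands. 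Conversely, (6) or (7) feeds back into (5) by applying $(-)^*$ to the defining syzygy sequence for $M^*$ and running the same finite-length--depth argument.

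For (2)$\Leftrightarrow$(3) I use Auslander's natural exact sequence
$$0\to\Ext^1_R(\Tr M,\omega)\to M\otimes_R\omega\to\Hom_R(M^*,\omega)\to\Ext^2_R(\Tr M,\omega)\to 0.$$
The term $\Hom_R(M^*,\omega)$ is reflexive, hence CM, and the image of the middle map coincides with $(M\otimes_R\omega)/\mathbf{T}(M\otimes_R\omega)$; thus $\Ext^2_R(\Tr M,\omega)=0$ is equivalent to this torsion-free quotient agreeing with the CM module $\Hom_R(M^*,\omega)$, i.e.\ being CM. For (1)$\Leftrightarrow$(2): the full sheaf $\c M$ is locally free on the regular surface $\w X$, so $\Ext^1_{\w X}(\c M,\s O)=H^1(\w X,\c M^\vee)$; Grothendieck duality for $\pi$ together with $R^1\pi_*\s O_{\w X}=0$ (rational singularity) and $\pi_*\omega_{\w X}=\omega_R$ translates the vanishing of this $H^1$ into Cohen-Macaulayness of $\pi_*(\c M\otimes\omega_{\w X})=(M\otimes_R\omega)/\mathbf{T}(M\otimes_R\omega)$.

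The main obstacle is (7), which is not a purely formal consequence of (5) in the non-Gorenstein setting: the identification $\Omega M\cong M^*$ up to free summands requires exploiting the symmetry of the two $(-)^*$-dual short exact sequences together with Krull-Schmidt uniqueness, rather than following directly from a single syzygy calculation. A secondary subtlety is the torsion bookkeeping in (1)$\Leftrightarrow$(2) under Grothendieck duality, while the finite-length argument in (4)$\Leftrightarrow$(5) relies essentially on the isolated singularity of $R$.
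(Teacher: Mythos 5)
The paper does not actually prove this theorem: it cites Wunram for (1)$\iff$(2) and \cite[2.7, 3.5]{Iyama_Wemyss_specials} for the rest, so your sketch is necessarily independent content. Most of your algebraic chain is sound: the local-duality/depth-lemma argument for (3)$\iff$(4); the four-term sequence $0\to M^*\to P_0^*\to(\Omega M)^*\to\Ext^1_R(M,R)\to0$ together with the finite length of $\Ext^1_R(M,R)$ for (4)$\iff$(5) and (5)$\Rightarrow$(6); and the Auslander sequence $0\to\Ext^1_R(\Tr M,\omega)\to M\otimes_R\omega\to\Hom_R(M^*,\omega)\to\Ext^2_R(\Tr M,\omega)\to0$ for (2)$\iff$(3) all work as you describe (one should note explicitly that $\Ext^1_R(\Tr M,\omega)$ equals the torsion submodule because it has finite length and the target $\Hom_R(M^*,\omega)$ is torsion-free). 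The outline of (1)$\iff$(2) is indeed Wunram's Grothendieck-duality computation, modulo the identification $\pi_*(\c{M}\otimes\omega_{\w{X}})\cong(M\otimes_R\omega)/{\bf T}(M\otimes_R\omega)$, which itself needs an argument.

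The genuine gap is (5)$\Rightarrow$(7). You flag it as the main obstacle, but the mechanism you propose --- ``the symmetry of the two $(-)^*$-dual short exact sequences together with Krull--Schmidt uniqueness'' --- does not produce the isomorphism. Under (5) the two sequences $0\to\Omega M\to P_0\to M\to0$ and $0\to M^*\to P_0^*\to(\Omega M)^*\to0$ are dual to one another, but they are extensions of \emph{different} pairs of modules by \emph{different} free modules, so Krull--Schmidt has nothing to match up; what they actually yield is $M^*\cong\Omega\bigl((\Omega M)^*\bigr)$ up to free summands and the fact that $(\Omega M)^*$ is again special, and iterating only reproduces the same two sequences. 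A rank count exposes what (7) really asserts: since ${\rm rk}\,\Omega M=\mu(M)-{\rm rk}\,M$ while ${\rm rk}\,M^*={\rm rk}\,M$, condition (7) forces $\mu(M)=2\,{\rm rk}\,M$ modulo free corrections, which is Wunram's formula $\mu(M)={\rm rk}\,M+Z_f\cdot c_1(M)$ combined with $Z_f\cdot c_1(M_i)={\rm rk}\,M_i$ for the indecomposable specials. That numerical identity is invisible from $\Ext^1_R(M,R)=0$ and the dual sequences alone; it comes from the classification of indecomposable special full sheaves by their first Chern classes via the Artin--Verdier/Wunram sequence $0\to\s{O}^{\oplus r}\to\c{M}\to\s{O}_D\to0$ --- exactly the machinery this paper deploys in the proof of Theorem~\ref{syzygy}, and the route taken by the cited proof in \cite{Iyama_Wemyss_specials}. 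As written, your argument establishes the equivalence of (1)--(6) but not (7).
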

\begin{proof}
(1)$\iff$(2) is due to Wunram \cite{Wunram_generalpaper}, the remainder can be found in \cite[2.7, 3.5]{Iyama_Wemyss_specials}.
\end{proof}
The category $\SCM(R)$ is Krull--Schmidt since it is a full subcategory of $\CM(R)$, which is Krull--Schmidt.  Note that Theorem~\ref{characterization of SCM} implies that there is a duality
\[
(-)^*:\SCM(R)\to\OCM(R)
\]
and so these categories are intimately related. Some of our arguments in this paper rely on geometric notions and results, which we now recall :
\begin{defin}[\cite{Art}]
For exceptional curves $\{ E_i \}_{i\in I}$, define the fundamental cycle $Z_f=\sum_{i\in I}r_i E_i$ (with each $r_i\geq 1$) to be the unique smallest element such that $Z_f\cdot E_i\leq 0$ for all $i\in I$.
\end{defin}
We remark that $\s{O}_{Z_f}=\s{O}_{\w{X}}/\m\s{O}_{\w{X}}$ where $\m$ is the unique maximal ideal of $R$.
\begin{thm}[{\cite[1.2]{Wunram_generalpaper}}]\label{Wurnam_main_result}
\t{(a)} For every irreducible curve $E_i$ in the
exceptional divisor of the minimal resolution there is exactly one
indecomposable CM module $M_i$ (up to isomorphism) with
\[
H^1(\c{M}_i^\vee)=0
\]
and
\[
c_1(\c{M}_i)\cdot E_j=\delta_{ij} \mbox{ for all } i,j\in I.
\]
The rank of $M_i$ equals $r_i=c_1(\c{M}_i)\cdot Z_f$ where $Z_f=\sum r_i E_i$ is the fundamental cycle.\\
\t{(b)} Let $N\in\CM(R)$ be non-free and indecomposable.  Then there exists $i\in I$ such that $N\cong M_i$ if and only if $H^1({\c{N}}^\vee)=0$.
\end{thm}
Thus $\SCM(R)$ (and dually $\OCM(R)$) has only a finite number of indecomposable objects (i.e.\ has \emph{finite type}) since the non-free indecomposable special CM modules are in one-to-one correspondence with the exceptional curves in the minimal resolution of $\Spec R$.  It is known that $\CM(R)$ has finite type if and only if $R$ is a quotient singularity \cite{Auslander_rational}, so by passing to the special CM modules we can use finite-type algebra in a much broader setting.

Recall that for an additive category $\c{C}$ and an object $M\in \c{C}$, we denote $\add M$ to be the full subcategory of $\c{C}$ consisting of summands of finite direct sums of copies of $M$. We say that $M$ is an \emph{additive generator} of $\c{C}$ if $\c{C}=\add M$.  Thus $\SCM(R)$ has an additive generator $R\oplus\bigoplus_{i\in I}M_i$.
One further fact we will use is that its endomorphism algebra is derived
equivalent to the minimal resolution:

\begin{thm}\label{derivedequiv}
There is an equivalence of triangulated categories
\[
\Db(\coh\w{X})\approx\Db(\mod\End_R(R\oplus\bigoplus_{i\in I}M_i)).
\]
where $\{ M_i\}_{i\in I}$ denotes the set of non-free indecomposable special CM modules up to isomorphism.  We call $\End_R(R\oplus\bigoplus_{i\in I}M_i)$ the \emph{reconstruction algebra}.
\end{thm}
\begin{proof}
This follows by combining the discussion in \cite[\S 2]{Wemyss_GL2} (based entirely on \cite{VdB}) together with \cite[Lemma 3.2]{Wemyss_GL2}.
\end{proof}
Since $\w{X}$ is smooth it follows that the reconstruction algebra has finite global dimension, but even although $\dim\w{X}=2$ it is usually the case that the reconstruction algebra has global dimension three:

\begin{thm}
\[
\gl \End_R(R\oplus\bigoplus_{i\in I}M_i)=\left\{\begin{array}{cl} 2& \mbox{if $R$ is Gorenstein}\\ 3 & \mbox{else.}\end{array}\right. 
\]
\end{thm}
\begin{proof}
This is shown in both \cite[2.10]{Iyama_Wemyss_specials} and \cite[Corollary 3.3]{Wemyss_GL2}, but for the convenience of the reader here we give a different, more direct proof.

Set $M:=R\oplus\bigoplus_{i\in I}M_i$ and $\Lambda:=\End_R(M)$.  To show that $\gl \Lambda\leq 3$, by \cite[2.11]{Iyama_Wemyss_specials} we just need to show that for all $X\in\CM(R)$ there exists an exact sequence
\[
0\to M_1\to M_0\to X\to 0
\]
with each $M_i\in\add M$ such that
\[
0\to\Hom_R(M,M_1)\to \Hom_R(M,M_0)\to \Hom_R(M,X)\to 0
\]
is exact.  To do this, consider the extension 
\[
0\to R^a\to T\to X\to 0
\] 
corresponding to the minimal number of generators of $\Ext^1_R(X,R)$.  Applying $\Hom_R(-,R)$, we have an exact sequence
\[
\Hom_R(R^a,R)\to \Ext^1_R(X,R)\to \Ext^1_R(T,R)\to \Ext^1_R(R^a,R)=0
\]
where the left map is surjective from our choice of the
extension. Thus we have $\Ext^1_R(T,R)=0$ and so $T\in\SCM(R)=\add M$.  Further
\[
0\to\Hom_R(M,R^a)\to \Hom_R(M,T)\to \Hom_R(M,X)\to \Ext^1_R(M,R^a)=0
\]
since $M\in\SCM(R)$.  Hence $\gl \Lambda\leq 3$.  We have $\gl \Lambda\geq 2$ by the depth lemma. Furthermore $\gl \Lambda=2$ if and only if $\add M=\CM(R)$ (again by \cite[2.11]{Iyama_Wemyss_specials}), and $\add M=\CM(R)$ if and only if $R$ is Gorenstein.
\end{proof}

To summarize and fix notation, $\{ E_i\}_{i\in I}$ denotes the irreducible  exceptional curves in the minimal resolution of $\Spec R$.  We denote the non-free indecomposable special CM modules by $\{ M_i\}_{i\in I}$, where $M_i$ corresponds to the curve $E_i$.  Thus $M:=R\oplus\bigoplus_{i\in I}M_i$ is an additive generator of $\SCM(R)$.  The corresponding full sheaves on the minimal resolution will be denoted by $\{\c{M}_i\}_{i\in I}$.  For any CM $R$-module $N$, we often denote the chern class $c_1(\c{N})$ by simply $c_1(N)$.

\section{Syzygies and Chern Classes}
When $R$ is Gorenstein every CM module is special and so the categories $\OCM(R)$ and $\SCM(R)$ coincide; they both equal $\CM(R)$.  We begin by determining the intersection of the categories $\OCM(R) $ and $\SCM(R) $ when $R$ is not Gorenstein.

\begin{prop}\label{SCM and OCM}
If $R$ is not Gorenstein, then:\\
\t{(1)} If $X\in\CM(R)$ such that $\Ext^i_R(X,R)=0$ for $i=1,2$ then $X$ is free.\\
\t{(2)} $\SCM(R)\cap \OCM(R)=\add R$.
\end{prop}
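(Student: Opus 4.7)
The two parts are essentially equivalent; I will first reduce (1) to (2) and then attack (2).

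\emph{Reduction of (1) to (2).} Given $X\in\CM R$ with $\Ext^i_R(X,R)=0$ for $i=1,2$, Theorem~\ref{characterization of SCM}(5) places $X\in\SCM(R)$, and (7) gives $\Omega X\cong X^\ast$ up to free summands. Dimension-shifting then yields
\[
\Ext^2_R(X,R)\cong\Ext^1_R(\Omega X,R)\cong\Ext^1_R(X^\ast,R),
\]
so $X^\ast\in\SCM(R)$ by (5). Applying (6) to $X^\ast$ and using reflexivity of CM modules over the two-dimensional normal domain $R$ gives $X\cong X^{\ast\ast}\in\OCM(R)$. Hence $X\in\SCM(R)\cap\OCM(R)$, and (2) forces $X\in\add R$.

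\emph{Proof of (2).} Let $M\in\SCM(R)\cap\OCM(R)$. By (6), the condition $M\in\OCM(R)$ is equivalent to $M^\ast\in\SCM(R)$, so both $M$ and $M^\ast$ are special. Applying (7) to each yields
\[
\Omega M\cong M^\ast,\qquad \Omega M^\ast\cong M^{\ast\ast}=M,
\]
all up to free summands, so $\Omega^2 M\cong M$ in the stable category. Iterating this periodicity produces a $2$-periodic complete projective resolution of $M$, and in particular $\Ext^i_R(M,R)=0$ for every $i>0$: that is, $M$ is totally reflexive (Gorenstein projective).

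To conclude $M\in\add R$ under the non-Gorenstein hypothesis, my plan is to dualise through the canonical module. Since $\Hom_R(-,\omega)$ is exact on $\CM R$, applying it to the $2$-periodic complete resolution of $M$ produces a $2$-periodic bi-infinite exact complex of copies of $\omega$ resolving $M^\dagger:=\Hom_R(M,\omega)$. Combining this $\omega$-resolution of $M^\dagger$ with the free $R$-resolution of $M$ (together with the reflexivity $M\cong M^{\dagger\dagger}$), a rank comparison shows that the existence of a non-free $M$ would force $\omega$ to split off as a summand of a free $R$-module, i.e.\ $\omega\cong R$, contradicting non-Gorensteinness. The main obstacle is precisely this final step: ruling out non-free Gorenstein-projective modules over a non-Gorenstein rational surface singularity, and in particular making the $R$/$\omega$-resolution comparison clean enough to yield a genuine contradiction.
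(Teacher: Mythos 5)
Your reduction of (1) to (2) and the first half of your argument for (2) track the paper closely: showing that $M$ and $M^*$ are both special, extracting from Theorem~\ref{characterization of SCM}(7) the two exact sequences $0\to M^*\to P\to M\to 0$ and $0\to M\to Q\to M^*\to 0$ with $P,Q\in\add R$, and splicing them into a $2$-periodic complete resolution to conclude $\Ext^t_R(M\oplus M^*,R)=0$ for all $t\geq 1$, i.e.\ that $M$ is totally reflexive. The genuine gap is exactly where you flag it: the passage from ``$M$ is totally reflexive and non-free'' to ``$R$ is Gorenstein''. Your proposed route --- dualising the complete resolution through $\omega$ and making a rank comparison to force $\omega\cong R$ --- cannot be completed as stated, because it uses nothing beyond the complete resolution and standard duality, and in that generality the implication is false: there exist non-Gorenstein Cohen--Macaulay local rings admitting non-free totally reflexive modules (necessarily infinitely many indecomposable ones). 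Any correct argument must therefore exploit something specific to the situation at hand.

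What the paper uses is the following pair of inputs. First, a totally reflexive module satisfies $\Ext^1_R(-,R)=0$, hence is special by Theorem~\ref{characterization of SCM}(5); since $\SCM(R)$ has finite type (the indecomposable non-free specials are indexed by the exceptional curves), $R$ has only \emph{finitely many} indecomposable totally reflexive modules. Second, the theorem of Takahashi \cite{Takahashi1} (see also \cite[4.3]{Takahashi2}): a local ring of depth two with only finitely many indecomposable totally reflexive modules, at least one of which is non-free, must be Gorenstein. The finiteness hypothesis is essential and is precisely the ingredient your sketch is missing; this is why your $R$/$\omega$-resolution comparison resists being made ``clean enough to yield a genuine contradiction''. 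If you add the observation that totally reflexive implies special (hence finiteness) and cite the Takahashi/Christensen--Piepmeyer--Striuli--Takahashi result, your proof closes up and coincides in substance with the paper's.
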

\begin{proof}
(1) Since $\Ext^1_R(X,R)=0$, by Theorem~\ref{characterization of SCM} we know that $X\in\SCM(R)$ and so $\Omega X\cong X^*$.  Further $0=\Ext^2_R(X,R)=\Ext^1_R(\Omega X,R)=\Ext^1_R(X^*,R)$ and so $X^*\in\SCM(R)$. Now applying Theorem \ref{characterization of SCM}(7) to both $X$ and $X^*$ there exist  short exact sequences
\begin{eqnarray}\label{Xses}
&&0\to X^*\to P\to X\to 0\\
&&0\to X\to Q\to X^*\to 0\label{Xstarses}
\end{eqnarray}
with $P,Q\in\add R$.  We want to prove that $\Ext^t_R(X,R)=\Ext^t_R(X^*,R)=0$ for all $t\geq 1$ so since we know this holds for $t=1$, inductively suppose that it holds for $t-1$.  Then by (\ref{Xses}) we know
\[
\Ext_R^t(X,R)\cong \Ext_R^{t-1}(X^*,R)=0
\]
and by (\ref{Xstarses}) we know
\[
\Ext_R^t(X^*,R)\cong \Ext_R^{t-1}(X,R)=0.
\]
Thus by induction it follows that $\Ext_R^t(X\oplus X^*,R)=0$ for all $t\geq 1$ and hence by definition $X$ is a totally reflexive module (see \cite{Takahashi1,Takahashi2}).
But since $R$ has only finitely many indecomposable special CM modules it has in particular only finitely many indecomposable totally reflexive modules.  If $X$ is non-free then by \cite{Takahashi1} (see also \cite[4.3]{Takahashi2}) it follows that $R$ is Gorenstein.  Hence $X$ must be free.\\
(2) It follows from (1) that if $X$ and $X^*$ are special, then $X^*$ is free.
\end{proof}

\begin{remark}
An easy conclusion of Proposition \ref{SCM and OCM} and Theorem \ref{characterization of SCM}(7) is that if $M$ is a non-free special CM module, then $\Omega^2M\cong\Omega(M^*)$ is never isomorphic to $M$. This is in contrast to the Gorenstein case in which $\Omega^2M$ is always isomorphic to $M$ \cite{Eisenbud}.
\end{remark}

We already know that the dual of the first syzygy of any CM module is special; the following result gives us precise information about the decomposition into indecomposables.
\begin{thm}\label{syzygy}
Let $M$ be a CM $R$-module.  Then $\Omega M\cong \bigoplus_{i\in I} (\Omega M_i)^{\oplus c_1(M)\cdot E_i}$.
\end{thm}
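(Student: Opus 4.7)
The plan is to combine the duality $(-)^{*}\colon\SCM(R)\to\OCM(R)$ recorded in Theorem~\ref{characterization of SCM} with Wunram's numerical characterization of the full sheaves $\c{M}_{i}$, and then extract the multiplicities $a_i$ by taking first chern classes in $\t{Pic}(\w{X})$.

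Since $M\in\CM(R)$, the first syzygy $\Omega M$ lies in $\OCM(R)$ by definition, so its $R$-dual $(\Omega M)^{*}$ lies in $\SCM(R)$. The non-free indecomposable specials being exhausted by $\{M_{i}\}_{i\in I}$, one obtains
\[
(\Omega M)^{*}\cong\bigoplus_{i\in I}M_{i}^{\oplus a_{i}}\oplus R^{\oplus s}
\]
for unique non-negative integers $a_{i}$ and $s$. Dualizing back, using reflexivity of CM modules over a two-dimensional normal domain together with Theorem~\ref{characterization of SCM}(7), which identifies $M_{i}^{*}$ with $\Omega M_{i}$ up to free summands, yields $\Omega M\cong\bigoplus_{i}(\Omega M_{i})^{\oplus a_{i}}$ once one notes (via Krull--Schmidt and minimality of the first syzygy) that neither side admits a free summand; in particular $s=0$.

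It remains to identify $a_{i}$ with $c_{1}(M)\cdot E_{i}$. Apply the full-sheaf functor $N\mapsto\c{N}$ to a projective cover $0\to\Omega M\to P\to M\to 0$ with $P\in\add R$. For rational surface singularities, this functor sends short exact sequences of CM modules to short exact sequences of locally free sheaves on $\w{X}$ (left-exactness coming from torsion-freeness on the smooth surface $\w{X}$), so additivity of $c_{1}$ gives $c_{1}(\Omega M)=-c_{1}(M)$. Substituting the decomposition above and using $c_{1}(\Omega M_{i})=-c_{1}(M_{i})$ then produces
\[
c_{1}(M)=\sum_{i\in I}a_{i}\,c_{1}(M_{i}),
\]
and intersecting with $E_{j}$ while invoking Wunram's defining relation $c_{1}(\c{M}_{i})\cdot E_{j}=\delta_{ij}$ extracts $a_{j}=c_{1}(M)\cdot E_{j}$, as required.

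The main technical obstacle is verifying that passage to full sheaves preserves the short exact sequence on $\w{X}$, so that chern-class additivity legitimately applies; this rests on the standard fact that for a rational surface singularity the associated full sheaves are locally free on the minimal resolution and that $\pi^{*}$ modulo torsion is exact on $\CM(R)$. Modulo this, the remainder of the argument is a formal manipulation of the duality $(-)^{*}$ and Wunram's numerical characterization of the $M_{i}$; a pleasant byproduct is the non-negativity $c_{1}(M)\cdot E_{i}\geq 0$, which is forced on the multiplicities $a_i$.
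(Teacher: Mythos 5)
Your opening reduction is fine as far as it goes: $(\Omega M)^{*}$ does lie in $\SCM(R)$, so Krull--Schmidt gives $(\Omega M)^{*}\cong\bigoplus_{i}M_{i}^{\oplus a_{i}}\oplus R^{\oplus s}$, and dualizing back (using reflexivity and Theorem~\ref{characterization of SCM}(7)) gives $\Omega M\cong\bigoplus_{i}(\Omega M_{i})^{\oplus a_{i}}\oplus R^{\oplus s}$. But the step you yourself single out as the main technical obstacle is false, and the argument collapses there. The functor $N\mapsto\c{N}=\pi^{*}N/{\rm torsion}$ is \emph{not} exact on $\CM(R)$: $\pi^{*}$ is only right exact, and after killing torsion the kernel $\s{K}$ of $\s{O}^{\oplus n}\to\c{M}$ strictly contains the image of $\pi^{*}\Omega M$ (the quotient is the torsion subsheaf of $\pi^{*}M$, supported on the exceptional set and nonzero in general), so $c_{1}$ of the full sheaf of $\Omega M$ is \emph{not} $-c_{1}(M)$. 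You can see this without any computation: every full sheaf is generated by global sections, so $c_{1}(N)\cdot E_{i}\geq0$ for \emph{every} $N\in\CM(R)$; since $\Omega M\in\CM(R)$, your formula $c_{1}(\Omega M)=-c_{1}(M)$ would force $c_{1}(M)\cdot E_{i}=0$ for every CM module, contradicting $c_{1}(M_{i})\cdot E_{i}=1$. Concretely, for $R=k[[x,y]]^{\frac{1}{3}(1,1)}$ one has $\omega\cong M_{1}$ and $\Omega\omega\cong\omega^{*}$ with $c_{1}(\Omega\omega)\cdot E=2$, whereas $-c_{1}(\omega)\cdot E=-1$. The correct relation between $c_{1}(\Omega M)$ and $c_{1}(M)$ is essentially the content of the theorem being proved, which is why the paper cannot argue formally in $\t{Pic}(\w{X})$ and instead works on the resolution: it starts from the Artin--Verdier sequence $0\to\s{O}^{\oplus r}\to\c{M}\to\s{O}_{D}\to0$, resolves $\s{O}_{D}$ by $Z_{f}\cdot D$ sections, identifies the kernel as $\bigoplus_i(\c{M}_{i}^{*})^{\oplus c_1(M)\cdot E_i}$ by Wunram's argument, and only then descends to $R$ via a pullback and rationality.

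There is a second, lesser gap: your claim that $s=0$ because ``minimality of the first syzygy'' rules out free summands is not a formal consequence of minimality. If $\Omega M\subseteq\m R^{\oplus n}$ had a free summand, the splitting $\Omega M\to R$ would merely produce a nonzero class in $\Ext^{1}_{R}(M,R)$ rather than an immediate contradiction, and this group is nonzero exactly when $M$ is not special. The absence of free summands is equivalent to Wunram's generator formula $\mu(M)=\t{rk}M+Z_{f}\cdot c_{1}(M)$, which the paper deduces as a \emph{corollary} of this theorem; to use it as an input you would have to cite \cite[2.1]{Wunram_generalpaper} outright. Neither gap is cosmetic: the first is a wrong statement on which the computation of the multiplicities $a_{i}$ entirely rests, and the second smuggles in the conclusion the paper is advertising as a by-product.
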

\begin{proof}
Since $M$ is CM, by Artin-Verdier \cite[1.2]{ArtVer} we have the following exact sequence
\[
0\to \s{O}^{\oplus r}\to \c{M}\to \s{O}_{D} \to 0
\]
where $r$ is the rank of $M$ and $D$ represents the chern class of $\c{M}$, i.e. $c_1(M)\cdot E_i=D\cdot E_i$ for all exceptional curves $E_i$.  Let $\m$ denote the unique maximal ideal of $R$, then the minimal number of generators of the $R$-module $H^0(\s{O}_D)$ is, by Nakayama's Lemma, the dimension of the vector space $H^0(\s{O}_D)/\m H^0(\s{O}_D)=H^0(\s{O}_D/\m\s{O}_D)=H^0(\s{O}_D\otimes\s{O}_{Z_f})$, which is precisely $Z_f\cdot D$.  Hence choosing such a set of generators yields an exact sequence
\[
0\to \s{K}\to \s{O}^{\oplus Z_f\cdot D}\to \s{O}_{D} \to 0.
\]

Exactly as in the proof of \cite[1.2(a)]{Wunram_generalpaper} (Wunram considered the case when $M$ is indecomposable, but his proof works in this more general setting) $\s{K}^*$ is a full sheaf of rank $ Z_f\cdot D$ satisfying $H^1(\s{K})=0$.  Thus there exists some special CM $R$-module $N$ such that $\s{K}^*=\c{N}$, with $c_1(N)\cdot E_i=D\cdot E_i=c_1(M)\cdot E_i$ for all $i$.

The decomposition of $N$ into indecomposable special CM $R$-modules
\[
N=R^{\oplus s}\oplus(\bigoplus_{i\in I} M_i^{\oplus b_i})
\]
gives a corresponding decomposition of $\c{N}$.  Using Theorem~\ref{Wurnam_main_result} we have 
\[
b_i=c_1(N)\cdot E_i=c_1(M)\cdot E_i
\]
for each $i\in I$. Thus we have
\[
\c{N}=\s{O}^{\oplus s}\oplus(\bigoplus_{i\in I} \c{M}_i^{\oplus c_1(M)\cdot E_i})
\]
for some $s\in\mathbb{N}$.  The fact that $s=0$ follows by running the argument in \cite[1.2(a)]{Wunram_generalpaper}, or alternatively by using \cite[3.5.3]{VdB}.  
Hence we have a short exact sequence
\[
0\to \bigoplus_{i\in I}  (\c{M}_i^*)^{\oplus c_1(M)\cdot E_i}\to \s{O}^{\oplus Z_f\cdot D}\to \s{O}_{D} \to 0
\]
from which taking the appropriate pullback gives us a diagram
\[
{\SelectTips{cm}{10}
\xy0;/r.5pc/:
(30,12)*+{0}="m2",(45,12)*+{0}="m3",
(30,6)*+{\bigoplus (\c{M}_i^*)^{\oplus c_1(M)\cdot E_i}}="a2",(45,6)*+{\bigoplus
(\c{M}_i^*)^{\oplus c_1(M)\cdot E_i}}="a3",
(10,0)*+{0}="0",(20,0)*+{\s{O}^{\oplus
r}}="1",(30,0)*+{\s{E}}="2",(45,0)*+{\s{O}^{\oplus Z_f\cdot
D}}="3",(55,0)*+{0}="4",
(10,-6)*+{0}="b0",(20,-6)*+{\s{O}^{\oplus
r}}="b1",(30,-6)*+{\c{M}}="b2",(45,-6)*+{\s{O}_D}="b3",(55,-6)*+{0}="b4",
,(30,-12)*+{0}="c2",(45,-12)*+{0}="c3"
\ar"0";"1"
\ar"1";"2"
\ar"2";"3"
\ar"3";"4"
\ar"b0";"b1"
\ar"b1";"b2"
\ar"b2";"b3"
\ar"b3";"b4"
\ar@{=}"1";"b1"
\ar@{=}"a2";"a3"
\ar"m2";"a2"
\ar"m3";"a3"
\ar"a2";"2"
\ar"a3";"3"
\ar"2";"b2"
\ar"3";"b3"
\ar"b2";"c2"
\ar"b3";"c3"
\endxy}
\]
Since $\pi:\w{X}\rightarrow \Spec R$ is a resolution of rational singularities $H^1(\s{O})=\Ext_{\w{X}}^1(\s{O},\s{O})=0$ and so the middle horizontal sequence splits, giving $\s{E}=\s{O}^{\oplus r+Z_f\cdot D}$.  Now since $H^1( \bigoplus_{i\in I} (\c{M}_i^*)^{\oplus c_1(M)\cdot E_i})=0$ we may push down the middle vertical sequence to obtain the short exact sequence
\[
0\to\bigoplus_{i\in I} M_i^{*\oplus c_1(M)\cdot E_i} \to R^{\oplus r+Z_f\cdot c_1(M)} \to M \to 0.
\]
Since by Theorem~\ref{characterization of SCM} $\Omega M_i\cong M_i^*$, the result follows.
\end{proof}
\begin{remark}
The above theorem gives us a global combinatorial method for computing chern classes of full sheaves in the cases of quotient singularities that doesn't resort to calculating with local co-ordinates on the minimal resolution, since the syzygy of any CM module can be easily calculated by using a counting argument on the AR(=McKay) quiver.  For details see \cite[4.9, 4.10]{Iyama_Wemyss_specials}.
\end{remark}
\begin{remark}
Since the above first syzygy contains no free summands it follows that any CM module $M$ is minimally generated by $\t{rk}M+Z_f\cdot c_1(M)$ elements.  This gives a new proof of \cite[2.1]{Wunram_generalpaper}.
\end{remark}

The following observation will be used in the next section.

\begin{cor}\label{chern example}
\t{(1)} We have $\Omega\omega\cong\bigoplus_{i\in I}(\Omega M_i)^{\oplus -E_i^2-2}$.\\
\t{(2)} If $R$ is not Gorenstein and $\omega$ is a special CM $R$-module, then the exceptional curve corresponding to $\omega$ is a $(-3)$-curve and all other exceptional curves are $(-2)$-curves.
\end{cor}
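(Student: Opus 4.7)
The plan for part (1) is to apply Theorem~\ref{syzygy} directly to $M=\omega$ and identify the intersection numbers $c_1(\omega)\cdot E_i$ geometrically. Since $R$ is a rational singularity, Grauert--Riemenschneider vanishing gives $\pi_*\omega_{\w{X}}=\omega$, so the full sheaf associated to $\omega$ in the sense of this paper is $\omega_{\w{X}}$ itself, and $c_1(\omega)=K_{\w{X}}$. Each exceptional curve $E_i$ is a smooth rational curve (since the resolution is minimal and $R$ is rational), so adjunction reads $K_{\w{X}}\cdot E_i+E_i^2=-2$, giving $c_1(\omega)\cdot E_i=-E_i^2-2$; substitution into Theorem~\ref{syzygy} then yields (1).

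For part (2), suppose $\omega$ is special. Since $R$ is local and non-Gorenstein, $\omega$ is indecomposable and non-free, so $\omega\cong M_j$ for a unique $j\in I$. The strategy is to compute $\Omega\omega$ in two ways. On one hand, Theorem~\ref{characterization of SCM}(7) applied to $M_j$ gives $\Omega\omega\cong M_j^*$. On the other hand, part (1) combined with $\Omega M_i\cong M_i^*$ (applying (7) to each $M_i$) gives
\[
\Omega\omega\cong\bigoplus_{i\in I}(M_i^*)^{\oplus -E_i^2-2}.
\]
The ``up to free summands'' ambiguity in (7) is harmless because $\Omega$ of any CM module contains no free summand (the remark following Theorem~\ref{syzygy}) and each non-free indecomposable $M_i^*$ has none either. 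Comparing the two expressions inside the Krull--Schmidt category $\OCM(R)$, and using that the $M_i^*$ are pairwise non-isomorphic non-free indecomposables while $M_j^*$ is itself indecomposable, forces exactly one multiplicity $-E_i^2-2$ to equal $1$ and the rest to vanish. The surviving index must be $j$, giving $E_j^2=-3$ at the vertex corresponding to $\omega$ and $E_i^2=-2$ for $i\neq j$.

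The main obstacle I anticipate is the bookkeeping of free summands in part (2): each of the two expressions for $\Omega\omega$ is natively determined only up to free summands, and the clean Krull--Schmidt cancellation depends on invoking the remark after Theorem~\ref{syzygy} to eliminate this ambiguity. Part (1) is a one-line corollary of Theorem~\ref{syzygy} once the full sheaf of $\omega$ has been identified with $\omega_{\w{X}}$, a standard consequence of rationality.
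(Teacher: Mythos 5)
Your proof is correct and takes essentially the same route as the paper: part (1) is Theorem~\ref{syzygy} applied to $M=\omega$ together with adjunction, with the identification $c_1(\omega)\cdot E_i=K_{\w{X}}\cdot E_i$ that the paper also uses without comment, and your Krull--Schmidt comparison of the two expressions for $\Omega\omega$ in part (2) is precisely the unpacking of the paper's ``immediate from (1)''.
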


\begin{proof}
(1) By Theorem~\ref{syzygy} $\Omega\omega\cong \bigoplus_{i\in I}(\Omega M_i)^{\oplus  E_i\cdot K_{\w{X}}}$.  Further the adjunction formula states that $-2=(K_{\w{X}}+E_i)\cdot E_i$ and so $K_{\w{X}}\cdot E_i=-E_i^2-2$.\\
(2) Immediate from (1).
\end{proof}

\section{A Frobenius Structure on $\SCM(R)$}
In this section we endow the category $\SCM(R)$ with a Frobenius structure and thus produce a triangulated category $\u{\u{\SCM}}(R)$.  
We say that an extension closed subcategory $\BB$ of an abelian category $\AA$ is an \emph{exact category}.
(This is slightly stronger than the formal definition by Quillen \cite{Qui73}. See also \cite[Appendix A]{Kel90}.)
For example $\CM(R)$ is an exact category.

We start with the following easy observation.

\begin{lemma}
\t{(1)} $\SCM(R)$ is an extension closed subcategory of $\CM(R)$.\\
\t{(2)} $\SCM(R)$ forms an exact category.
\end{lemma}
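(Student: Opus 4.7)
The plan is to use the characterization of special CM modules most amenable to extensions, namely Theorem~\ref{characterization of SCM}(5): $M\in\SCM(R)$ if and only if $M\in\CM(R)$ and $\Ext^1_R(M,R)=0$. This single $\Ext$-vanishing condition is manifestly a ``2-out-of-3'' property on short exact sequences, which is exactly what is needed for extension-closedness.

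For (1), I would take a short exact sequence
\[
0\to A\to B\to C\to 0
\]
in $\CM(R)$ with $A,C\in\SCM(R)$. Since $\CM(R)$ is itself extension closed in $\mod R$, we already know $B\in\CM(R)$; so it remains only to verify $\Ext^1_R(B,R)=0$. Applying $\Hom_R(-,R)$ to the sequence and looking at the resulting long exact sequence in $\Ext^*_R(-,R)$ gives a fragment
\[
\Ext^1_R(C,R)\to\Ext^1_R(B,R)\to\Ext^1_R(A,R),
\]
whose outer terms vanish by Theorem~\ref{characterization of SCM}(5) applied to $A$ and $C$. Thus $\Ext^1_R(B,R)=0$, and a second application of Theorem~\ref{characterization of SCM}(5) yields $B\in\SCM(R)$.

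For (2), the paper defines an exact category to be an extension closed subcategory of an abelian category. Since $\CM(R)$ is extension closed in $\mod R$ and, by (1), $\SCM(R)$ is extension closed in $\CM(R)$, it follows that $\SCM(R)$ is extension closed in the abelian category $\mod R$. Hence $\SCM(R)$ is an exact category in the sense of the paper.

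There is no real obstacle here; the only mild subtlety is making sure the short exact sequence in (1) is taken to be a sequence in $\CM(R)$ (equivalently, a short exact sequence in $\mod R$ with all three terms CM), so that the middle term is automatically CM before one starts checking the specialness condition. Everything else is an immediate application of Theorem~\ref{characterization of SCM}.
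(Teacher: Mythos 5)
Your argument is correct and is exactly the paper's: the authors also deduce (1) as an immediate consequence of Theorem~\ref{characterization of SCM}(5) via the long exact sequence for $\Ext_R^*(-,R)$, and (2) follows directly from (1) under the paper's definition of exact category. You have simply written out the details that the paper leaves implicit.
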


\begin{proof}
(1) is an immediate consequence of Theorem \ref{characterization of SCM}(5),
and (2) is a consequence of (1).
\end{proof}

Let us recall the definition of Frobenius categories \cite{Hel60,Happel}.
We say that an object $X\in\BB$ is \emph{relatively projective} (respectively, \emph{relatively injective}) if
\[\Ext^1_{\AA}(X,\BB)=0\ \ \ (\mbox{respectively, } \Ext^1_{\AA}(\BB,X)=0).\]
We say that $\BB$ has \emph{enough relatively projective objects} (respectively, \emph{injective}) if for any $X\in\BB$, there exists an exact sequence
\[0\to Z\to Y\to X\to0\ \ \ (\mbox{respectively, } 0\to X\to Y\to Z\to0)\]
in $\AA$ such that $Y\in\BB$ is relatively projective (respectively, injective) and $Z\in\BB$.
We say that $\BB$ is \emph{Frobenius} if it has enough relatively projective and enough relatively injective objects, and further the relatively projective and the relatively injective objects coincide.
When $\BB$ is a Frobenius category with the subcategory $\mathcal{P}$ of relatively projective objects, the factor category
\[\u{\u{\BB}}:=\BB/[\mathcal{P}]\]
is called the \emph{stable category} of $\BB$.
The reason why we use the notation $\u{\u{\BB}}$ is to distinguish the stable category $\u{\u{\SCM}}(R)$ (which we will study) from the full subcategory $\u{\SCM}(R)$ of $\u{\CM}(R)$.

Our main result in this section is the following.

\begin{thm}\label{frobenius structure}
\t{(1)} $\SCM(R)$ is a Frobenius category.\\
\t{(2)} The stable category $\u{\u{\SCM}}(R)$ is a triangulated category.
\end{thm}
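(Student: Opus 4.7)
Part (2) is immediate from Happel's theorem that the stable category of a Frobenius category is triangulated, so the content of Theorem~\ref{frobenius structure} is entirely in (1). The preceding lemma already supplies the exact structure on $\SCM(R)$, so establishing the Frobenius property reduces to three points: enough relatively projective objects, enough relatively injective objects, and coincidence of the two classes. A first observation is that $R$ is both relatively projective (trivially, as $\Ext^1_R(R,-)=0$) and relatively injective (by Theorem~\ref{characterization of SCM}(5), which gives $\Ext^1_R(X,R)=0$ for every $X\in\SCM(R)$), providing a non-empty starting class.

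The main task is to produce, for each $X\in\SCM(R)$, short exact sequences \[0\to Y\to P\to X\to 0\quad\text{and}\quad 0\to X\to I\to Y'\to 0\] in $\SCM(R)$ with $P$ relatively projective, $I$ relatively injective, and $Y,Y'\in\SCM(R)$. A naive free presentation $0\to\Omega X\to R^n\to X\to 0$ is not admissible because Proposition~\ref{SCM and OCM} shows that for non-free $X$ the syzygy $\Omega X\in\OCM(R)$ has no non-free summand in common with $\SCM(R)$, so the kernel leaves the subcategory. The plan is to repair this by a pushout: produce an auxiliary sequence $0\to\Omega X\to S\to T\to 0$ in $\CM(R)$ with $S,T\in\SCM(R)$, and push out the free presentation along $\Omega X\to S$ to obtain $0\to S\to P\to X\to 0$ whose middle term also fits in $0\to R^n\to P\to T\to 0$ and hence lies in $\SCM(R)$ by extension closure. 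Theorem~\ref{syzygy}, which decomposes $\Omega X$ as a direct sum of $\Omega M_i$'s weighted by chern class intersections, together with the finite representation type of $\SCM(R)$, should both supply the auxiliary sequence and let us arrange for $P$ to be relatively projective. A dual construction via cosyzygies and $\omega$-duality yields the relatively injective sequence.

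For the coincidence of the relatively projective and relatively injective classes I plan to invoke the Auslander--Reiten duality \[\Ext^1_R(X,Y)\cong D\underline{\Hom}_R(Y,\tau X)\] valid on $\CM(R)$ since $R$ is a two-dimensional complete local CM ring. In dimension two the AR translate $\tau$ coincides with the Nakayama functor $\nu=(-\hat\otimes_R\omega)$, whose interaction with the SCM condition is encoded in Theorem~\ref{characterization of SCM}(2). Translating relative projectivity through this formula converts it into a relative injectivity condition after applying $\nu$, and bookkeeping over the finite set of indecomposable specials then yields the equality of the two classes.

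The chief obstacle is arranging the pushout so that the middle term is \emph{genuinely} relatively projective, not merely an object of $\SCM(R)$; extension closure supplies membership in $\SCM(R)$ for free, but the Ext-vanishing against every indecomposable special must be checked case by case using the explicit decomposition in Theorem~\ref{syzygy}. A secondary difficulty is the $\nu$-bookkeeping in the coincidence step, since $\nu$ does not preserve $\SCM(R)$ on the nose and one must work modulo the relatively projective objects identified along the way.
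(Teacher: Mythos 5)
Your outline correctly isolates the three things to be proved and makes two sound preliminary observations: that $R$ is both relatively projective and relatively injective, and that the free presentation $0\to\Omega X\to R^n\to X\to0$ leaves $\SCM(R)$ because $\Omega X\cong X^*\in\OCM(R)$ while $\SCM(R)\cap\OCM(R)=\add R$. However, both central steps are left as plans whose acknowledged obstacles are precisely the mathematical content. For ``enough relatively projectives/injectives'' you must first produce the auxiliary sequence $0\to\Omega X\to S\to T\to0$ with $S,T\in\SCM(R)$ and then verify that the pushout middle term is relatively projective against every indecomposable special; you concede this would have to be ``checked case by case'' but supply no mechanism, and it is not at all clear that a pushout of a free presentation can be arranged to be relatively projective. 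The paper avoids this entirely by proving a general transfer result: if $\CC$ is a Krull--Schmidt, contravariantly finite, extension-closed subcategory of an exact category with enough relatively injective objects, then so is $\CC$ --- one takes a projective cover $\Hom_{\CC}(-,Y)\to\Ext^1(-,X)|_{\CC}\to0$ in the category of $\CC$-modules, realizes it by an extension $0\to X\to Z\to Y\to0$ with terms in $\CC$, and shows $Z$ is relatively injective. This applies to $\SCM(R)\subseteq\CM(R)$ because $\SCM(R)$ has finite type and is therefore functorially finite; no syzygy or chern-class computation enters.

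For the coincidence of the relatively projective and relatively injective objects, the missing idea is the symmetry $\Ext^1_R(X,Y)\cong\Ext^1_R(Y,X)$ for all $X,Y\in\SCM(R)$, which makes the coincidence tautological. The paper obtains it by applying $\Hom_R(-,Y)$ and $\Hom_R(Y^*,-)$ to the sequence $0\to X^*\to P\to X\to0$ of Theorem~\ref{characterization of SCM}(7) and using the duality $(-)^*:\SCM(R)\to\OCM(R)$ to get $\Ext^1_R(X,Y)\cong\underline{\Hom}_R(Y^*,X)\cong\underline{\Hom}_R(X,Y^*)\cong\Ext^1_R(Y,X)$. Your proposed route through AR duality and the Nakayama functor is not carried out: as you note, $\tau$ does not preserve $\SCM(R)$, and the ``bookkeeping modulo relative projectives'' you defer is exactly where such an argument would have to succeed or fail (it is also sensitive to the distinction between $\underline{\Hom}$ and $\overline{\Hom}$ in the duality formula). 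As written, neither the existence of enough relative projectives and injectives nor their coincidence is established.
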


Let us recall the definition of functorially finite subcategories introduced by Auslander-Smal\o\, \cite{AS}.
Let $\BB$ be an additive category and $\CC$ a full subcategory of $\BB$.
We say that a subcategory $\CC$ of an additive category $\BB$ is \emph{contravariantly finite} (respectively, \emph{covariantly finite})  if for any $X\in\BB$,
there exists a morphism $f:Y\to X$ (respectively, $f:X\to Y$) with $Y\in\CC$ such that
\[
\Hom_{\BB}(\CC,Y)\to\Hom_{\BB}(\CC,X)\ \ \ (\mbox{respectively,} \Hom_{\BB}(Y,\CC)\to\Hom_{\BB}(X,\CC))
\]
is surjective.  We say that $\CC$ is a \emph{functorially finite} subcategory of $\BB$ if it is both contravariantly and covariantly finite.

We need the following rather general observation.

\begin{prop}
Let $\BB$ be a Krull-Schmidt exact category with enough relatively injective (respectively, projective) objects, and $\CC$ a contravariantly (respectively, covariantly) finite extension closed subcategory of $\BB$. Then $\CC$ is an exact category with enough relatively injective (respectively, projective) objects.
\end{prop}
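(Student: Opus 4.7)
The plan is to handle the injective case; the projective one is formally dual. First I observe that $\CC$ is automatically exact: it is extension closed in $\BB$, which in turn is extension closed in some ambient abelian category $\AA$, so $\CC$ is extension closed in $\AA$ and inherits an exact structure whose conflations are precisely the short exact sequences in $\AA$ with all terms in $\CC$.

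For the existence of enough relative injectives in $\CC$, I would proceed by the standard pullback construction. Given $X\in\CC$, since $\BB$ has enough relatively injective objects, choose a conflation
\[
0\to X\to I\to W\to 0
\]
in $\BB$ with $I$ relatively injective in $\BB$. Contravariant finiteness of $\CC$ in $\BB$ produces a right $\CC$-approximation $f:Y\to W$ with $Y\in\CC$, and the Krull--Schmidt hypothesis permits the additional choice that $f$ is right minimal. Pulling the displayed conflation back along $f$ gives a conflation
\[
0\to X\to E\to Y\to 0
\]
in $\BB$, since pullback of a conflation along an arbitrary morphism is a conflation in any exact category. Because $X,Y\in\CC$ and $\CC$ is extension closed, $E\in\CC$ and the sequence is a conflation in $\CC$.

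The remaining task, which is the main substance of the argument, is to verify that $E$ is relatively injective in $\CC$. Given $C\in\CC$ and a conflation $\xi:0\to E\to V\to C\to 0$ in $\CC$, I would push $\xi$ out along the pullback projection $\alpha:E\to I$; the resulting conflation $0\to I\to V'\to C\to 0$ in $\BB$ splits by relative injectivity of $I$, yielding a morphism $s:V\to I$ extending $\alpha$. Composing with $I\to W$ gives a morphism $V\to W$ with $V\in\CC$; factoring it through $f$ using contravariant finiteness produces $\tilde g:V\to Y$, and the pair $(s,\tilde g)$ defines a morphism $\sigma:V\to E$ via the pullback universal property. One then verifies, using that $s$ agrees with $\alpha$ on $E$ together with the right-minimality of $f$, that $\sigma|_E$ is an automorphism of $E$; post-composing by its inverse gives the desired retraction of $\xi$, and hence $\xi$ splits.

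The main obstacle is that final verification that $\sigma|_E$ is invertible. This is precisely the step where both the Krull--Schmidt hypothesis and the right-minimality of $f$ are essential, since a naive long exact sequence comparison only yields that $\Ext^1(C,E)$ injects into $\Ext^1(C,Y)$ rather than vanishing outright, so an additional rigidity argument -- controlling $\sigma|_E-\t{id}_E$ as a self-map whose composition with $\alpha$ vanishes and whose composition with $g':E\to Y$ becomes zero after applying $f$ -- is needed to conclude invertibility.
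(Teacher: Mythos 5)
Your overall architecture is close to the paper's: both arguments start from a conflation $0\to X\to I\to W\to 0$ with $I$ relatively injective in $\BB$ and produce the candidate relatively injective object of $\CC$ as (in effect) a pullback along a map $Y\to W$ with $Y\in\CC$, using extension-closure to stay inside $\CC$. The gap is in \emph{which} minimality you impose. You take $f\colon Y\to W$ to be a right minimal right $\CC$-approximation of $W$; the paper instead chooses $Y$ so that $\Hom_{\CC}(-,Y)\to\Ext^1_{\AA}(-,X)|_{\CC}$ is a projective cover of $\CC$-modules (this functor is finitely generated by contravariant finiteness, and the cover exists by the Krull--Schmidt hypothesis). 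These conditions genuinely differ. Right minimality of $f$ only excludes summands $Y_0$ of $Y$ with $f|_{Y_0}=0$; it does not exclude summands with $f|_{Y_0}\neq 0$ that nevertheless lift through $p\colon I\to W$, i.e.\ whose image under the connecting map $\Hom(Y_0,W)\to\Ext^1_{\AA}(Y_0,X)$ vanishes. Any such $Y_0$ splits off as a direct summand of your pullback, $E\cong E'\oplus Y_0$, and there is no reason for $Y_0$ to be relatively injective in $\CC$. The degenerate case makes this vivid: if $X$ happens to be relatively injective in $\CC$ already while $0\to X\to I\to W\to 0$ is non-split in $\BB$, then every map from an object of $\CC$ to $W$ lifts through $I$, so your $E\cong X\oplus Y$ with $Y$ the minimal approximation of $W$ --- an essentially arbitrary object of $\CC$. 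Consequently the step you defer (``$\sigma|_E$ is invertible'') is not a postponed technical verification but a statement that is false for your $E$ in general; no rigidity argument can rescue it.

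The repair is exactly the paper's choice: replace the minimal approximation of $W$ by a projective cover of $\Ext^1_{\AA}(-,X)|_{\CC}$ (equivalently, after forming your $Y$, discard the maximal summand whose approximation map factors through $I$). With that choice the paper also verifies relative injectivity by a different route: given a test conflation $0\to Z\to Z'\to Z''\to 0$ in $\CC$, push out along $Z\to Y$; the projective cover property forces the induced map $a\colon Y\to Y'$ to be a split monomorphism, and the surjectivity of $\Hom_{\CC}(-,Y)\to\Ext^1_{\AA}(-,X)|_{\CC}$ evaluated at $Z''$ gives injectivity of $\Ext^1(Z'',Z)\to\Ext^1(Z'',Y)$, whence the test conflation splits. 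This functorial comparison avoids having to analyse a self-map such as your $\sigma|_E-\mathrm{id}_E$ by hand, and is the part of your write-up you would in any case still need to supply.
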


\begin{proof}
We prove the statement regarding relatively injective objects; the proof for relatively projective objects is similar.  It is clear that $\CC$ is also an exact category. Let $X$ be in $\CC$ and
take an exact sequence $0 \to X \to I \to X' \to 0$ with $I$ relatively injective in $\BB$. Then we have an exact sequence of functors $\Hom_{\BB}(- ,X') \to \Ext ^1_{\AA}(- ,X)\to 0$. Since $\CC$ is Krull-Schmidt and contravariantly finite in $\BB$, we can take a projective cover
$\phi:\Hom_{\CC}(- ,Y)\to{\Ext}^1_{\AA}(- ,X)|_{\CC}\to0$ of $\CC$-modules (for the definition of $\CC$-modules see for example \cite{Y}).
This is induced by an exact sequence $0\to X\to Z\stackrel{}{\to}
Y\to0$ with terms in $\CC$.

We will show that $Z$ is relatively injective. Take any exact sequence $0\to
Z\stackrel{}{\to}Z'\to Z''\to0$ with terms in $\CC$. We will
show that this splits. Consider the following exact commutative diagram:
\begin{equation}\label{commutative diagram}
\begin{array}{c}
{\SelectTips{cm}{10}
\xy0;/r.28pc/:
(20,8)*+{0}="a2",(30,8)*+{0}="a3",
(0,0)*+{0}="0",(10,0)*+{X}="1",(20,0)*+{Z}="2",(30,0)*+{Y}="3", (40,0)*+{0}="4" , 
(0,-8)*+{0}="b0",(10,-8)*+{X}="b1",(20,-8)*+{Z^\prime}="b2",(30,-8)*+{Y^\prime}="b3",(40,-8)*+{0}="b4" ,
(20,-16)*+{Z^{\prime\prime}}="c2",(30,-16)*+{Z^{\prime\prime}}="c3",
(20,-24)*+{0}="d2",(30,-24)*+{0}="d3",
\ar"a2";"2"
\ar"a3";"3"
\ar"0";"1"
\ar"1";"2"
\ar"2";"3"
\ar"3";"4"
\ar@{=}"1";"b1"
\ar"2";"b2"
\ar^{a}"3";"b3"
\ar"b0";"b1"
\ar"b1";"b2"
\ar"b2";"b3"
\ar"b3";"b4"
\ar"b2";"c2"
\ar"b3";"c3"
\ar@{=}"c2";"c3"
\ar"c2";"d2"
\ar"c3";"d3"
\endxy}
\end{array}
\end{equation}
Then $Y'\in\CC$, and we have the commutative diagram
\begin{equation}\label{commutative diagram2}
\begin{array}{c}
{\SelectTips{cm}{10}
\xy0;/r.39pc/:
(5,0)*+{0}="0",(15,0)*+{\Hom_{\CC}(-,X)}="1",(30,0)*+{\Hom_{\CC}(-,Z)}="2",(45,0)*+{\Hom_{\CC}(-,Y)}="3",(60,0)*+{\Ext^1_{\AA}(-,X)|_{\CC}}="4", (70,0)*+{0}="5" , (5,-6)*+{0}="b0",(15,-6)*+{\Hom_{\CC}(-,X)}="b1",(30,-6)*+{\Hom_{\CC}(-,Z^\prime)}="b2",(45,-6)*+{\Hom_{\CC}(-,Y^\prime)}="b3",(60,-6)*+{\Ext^1_{\AA}(-,X)|_{\CC}}="b4"
\ar"0";"1"
\ar"1";"2"
\ar"2";"3"
\ar"3";"4"^{\phi}
\ar"4";"5"
\ar@{=}"1";"b1"
\ar"2";"b2"
\ar^{\cdot a}"3";"b3"
\ar@{=}"4";"b4"
\ar"b0";"b1"
\ar"b1";"b2"
\ar"b2";"b3"
\ar"b3";"b4"
\endxy}
\end{array}
\end{equation}
of exact sequences of $\CC$-modules. Since $\phi$ is a projective cover, we have that
$(\cdot a)$ is a split monomorphism. Thus $a$ is a split monomorphism.
We see that the sequence $0 \to \Ext^1(Z'',Z) \to \Ext^1(Z'',Y)$ is
exact by evaluating the upper sequence in \eqref{commutative diagram2}
at $Z''$. Under this map the middle vertical exact sequence in \eqref{commutative diagram} gets sent to the right vertical exact sequence in \eqref{commutative diagram}, so since this splits it follows that the middle
vertical sequence in \eqref{commutative diagram} splits. Hence $Z$ is
relatively injective, and consequently $\CC$ has enough relatively injective objects.
\end{proof}

In particular, since $\CM(R)$ has enough relatively projective and injective objects and further $\SCM(R)$ is a functorially finite subcategory of $\CM(R)$,
we conclude that $\SCM(R)$ has enough relatively projective and injective objects.

We need the following observation:
\begin{lemma}
For any $X,Y\in\SCM(R)$ we have $\Ext^1_R(X,Y)\cong \Ext^1_R(Y,X)$.
\end{lemma}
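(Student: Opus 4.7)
The plan is to compute $\Ext^1_R(X,Y)$ as an unstable Hom quotient and then exploit the manifest $X$–$Y$ symmetry of $\Hom_R(X^*\otimes_R Y^*,R)$.

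First, using Theorem~\ref{characterization of SCM}(7), fix short exact sequences $0\to X^*\to P\to X\to 0$ and $0\to Y^*\to Q\to Y\to 0$ with $P,Q\in\add R$. Applying $\Hom_R(-,Y)$ to the first and using $\Ext^1_R(P,Y)=0$ gives $\Ext^1_R(X,Y)\cong\Hom_R(X^*,Y)/I$, where $I$ is the image of $\Hom_R(P,Y)\to\Hom_R(X^*,Y)$. The next step is to identify $I$ with the full submodule $\mathcal{I}(X^*,Y)\subseteq\Hom_R(X^*,Y)$ consisting of morphisms factoring through some object of $\add R$. The inclusion $I\subseteq\mathcal{I}(X^*,Y)$ is obvious, and conversely given a factorisation $X^*\xrightarrow{g}F\xrightarrow{h}Y$ with $F\in\add R$, one lifts $g$ to $P\to F$ since the obstruction lies in $\Ext^1_R(X,F)=\Ext^1_R(X,R)^{\oplus n}$, which vanishes by Theorem~\ref{characterization of SCM}(5); postcomposing with $h$ then extends the original map to $P\to Y$, as required.

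Next, since CM modules over the normal two-dimensional domain $R$ are reflexive, the standard tensor–Hom adjunction yields a chain of natural isomorphisms
\[
\Hom_R(X^*,Y)\cong\Hom_R(X^*,Y^{**})\cong\Hom_R(X^*\otimes_R Y^*,R)\cong\Hom_R(Y^*,X).
\]
The middle term is patently symmetric in $X$ and $Y$, and the key point to verify is that $\mathcal{I}(X^*,Y)$ and $\mathcal{I}(Y^*,X)$ correspond under this chain: a morphism factors through some $R^n$ exactly when the associated element of $(X^*\otimes_R Y^*)^*$ lies in the image of the evaluation map $X\otimes_R Y\to (X^*\otimes_R Y^*)^*$, a condition manifestly symmetric in $X$ and $Y$. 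Combining the two steps gives $\Ext^1_R(X,Y)\cong\Hom_R(X^*,Y)/\mathcal{I}(X^*,Y)\cong\Hom_R(Y^*,X)/\mathcal{I}(Y^*,X)\cong\Ext^1_R(Y,X)$.

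I expect the identification of $I$ with $\mathcal{I}(X^*,Y)$ to be the main point deserving scrutiny, since this is precisely where the specialness hypothesis $\Ext^1_R(X,R)=0$ enters in an essential way; without it one would only obtain an inclusion and the symmetry argument would collapse. Everything else is formal tensor–Hom bookkeeping for reflexive modules, so once this first step is secured the rest of the proof should be routine.
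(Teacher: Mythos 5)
Your proof is correct and follows essentially the same route as the paper's: both use the special syzygy sequence $0\to X^*\to P\to X\to 0$ to identify $\Ext^1_R(X,Y)$ with a stable Hom-group of duals, and both derive the symmetry from reflexivity of CM modules (your tensor--Hom middle term $\Hom_R(X^*\otimes_R Y^*,R)$ is just an explicit form of the duality $\Hom_R(X^*,Y)\cong\Hom_R(Y^*,X)$ that the paper invokes). The only cosmetic difference is that you identify the relevant image as the maps factoring through $\add R$ by lifting along $X^*\hookrightarrow P$ using $\Ext^1_R(X,R)=0$, whereas the paper transports everything across the duality isomorphism and recognises the cokernel as $\u{\Hom}_R(Y^*,X)$.
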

\begin{proof}
It is standard that $\u{\Hom}_R(\Omega X,Y)\cong\Ext^1_R(X,Y)$.  Hence since $\Omega X\cong X^*$ and $\Omega Y\cong Y^*$  by Theorem~\ref{characterization of SCM}, we have
\[
\Ext^1_R(X,Y)\cong \u{\Hom}_R(X^*,Y)\cong \u{\Hom}_R(Y^*,X)\cong\Ext^1_R(Y,X).
\]
where the middle isomorphism is given by the duality $(-)^*:\underline{\CM}(R)\to\underline{\CM}(R)$ induced from the duality $(-)^*:\CM(R)\to \CM(R)$.
 \end{proof}

Thus in $\SCM(R)$ the relatively projective objects and the relatively injective objects coincide and so
consequently $\SCM(R)$ is a Frobenius category and thus $\u{\u{\SCM}}(R)$ is triangulated \cite{Happel}.
This completes the proof of Theorem \ref{frobenius structure}.
\qed

\medskip
The next result gives a precise description of the relatively projective objects:

\begin{thm}\label{-2 curve}
Let $i\in I$. Then $M_i$ is relatively projective in $\SCM(R)$ if and only if $E_i$ is not a $(-2)$-curve.
\end{thm}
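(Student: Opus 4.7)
Plan: The plan is to translate relative projectivity into an $\Ext^1$-vanishing condition and invoke Corollary~\ref{chern example}(1). By definition $M_i$ is relatively projective in $\SCM(R)$ iff $\Ext^1_R(M_i,Y)=0$ for every $Y\in\SCM(R)$. Since the category is of finite type with indecomposables $R\cup\{M_j\}_{j\in I}$ and $\Ext^1_R(M_i,R)=0$ by Theorem~\ref{characterization of SCM}(5), this reduces to $\Ext^1_R(M_i,M_j)=0$ for all $j\in I$. The previous Lemma, combined with the identification $M_j^*\cong\Omega M_j$ modulo free summands from Theorem~\ref{characterization of SCM}(7), rewrites this as
\[\u{\Hom}_R(\Omega M_j,M_i)=0 \text{ for every } j\in I.\]

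The key geometric input is Corollary~\ref{chern example}(1): $\Omega\omega\cong\bigoplus_{j\in I}(\Omega M_j)^{\oplus(-E_j^2-2)}$. Since the $E_j$ are exceptional curves in a minimal resolution we have $-E_j^2-2\geq 0$, with equality precisely when $E_j$ is a $(-2)$-curve. So $\Omega M_j$ is a direct summand of $\Omega\omega$ iff $E_j$ is not a $(-2)$-curve.

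For the direction \emph{$E_i$ is not $(-2)$ implies $M_i$ is relatively projective}, we use that $\Omega M_i$ is a direct summand of $\Omega\omega$, which sits in the short exact sequence $0\to\Omega\omega\to R^n\to\omega\to 0$. Applying the symmetric form of the Lemma, $\Ext^1(M_i,M_j)\cong\u{\Hom}(\Omega M_i,M_j)$, the goal becomes factoring every map $\Omega M_i\to M_j$ through $R^n$: any such map extends via the projection $\Omega\omega\twoheadrightarrow\Omega M_i$ to a map $\Omega\omega\to M_j$, and controlling the extension obstruction in $\Ext^1_R(\omega,M_j)$ via AR-duality in $\CM(R)$ yields the required factorization and hence the vanishing.

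For the converse direction \emph{$E_i$ is a $(-2)$-curve implies $M_i$ is not relatively projective}, we construct a non-split extension in $\SCM(R)$. The natural candidate is the almost-split sequence $0\to\tau M_i\to E\to M_i\to 0$ in $\CM(R)$, where $\tau M_i=\Hom_R(\Omega\Tr M_i,\omega)$ is the AR-translate of the two-dimensional CM ring $R$. The main technical obstacle is showing that $\tau M_i$ lies in $\SCM(R)$ precisely in the $(-2)$-case: this uses Theorem~\ref{characterization of SCM} together with the specific geometry of a $(-2)$-curve in the dual graph, after which the almost-split sequence is an element of $\SCM(R)$ that immediately witnesses $\Ext^1_R(M_i,\tau M_i)\neq 0$ and exhibits $M_i$ as not relatively projective.
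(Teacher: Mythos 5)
Your reduction of relative projectivity to $\Ext^1_R(M_i,M_j)=0$, and the observation that $\Omega M_i$ is a direct summand of $\Omega\omega$ exactly when $E_i$ is not a $(-2)$-curve, are the correct starting points and match the paper's strategy. However, both directions break down precisely at the step you leave vague. For the direction \emph{not $(-2)$ implies relatively projective}: you propose to factor a map $f:\Omega M_i\to M_j$ through a free module by extending $(f,0):\Omega\omega\to M_j$ along $\Omega\omega\hookrightarrow R^n$, with obstruction in $\Ext^1_R(\omega,M_j)$. That obstruction group does not vanish in general, and the specific classes you need to kill are genuinely nonzero. Take $R=k[[x,y]]^{\frac{1}{5}(1,1)}$ (a single $(-5)$-curve): here $\omega\cong M_3$, the unique non-free special is $M_1=(x,y)$, $\Omega M_1\cong M_4$ and $\Omega\omega\cong M_4^{\oplus 3}$. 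One checks $\Ext^1_R(\omega,M_1)\cong D\overline{\Hom}_R(M_1,\tau\omega)=D\overline{\Hom}_R(M_1,M_1)\neq0$, and the map $x^2:M_4\to M_1$, extended by zero to $\Omega\omega$, does not lift to $R^4$ even though it does factor through a free module (as $M_4\xrightarrow{x}R\xrightarrow{x}M_1$). So your obstruction-theoretic mechanism cannot deliver the factorization; the statement you would need ($\Ext^1_R(\omega,M_j)=0$) is simply false. The paper avoids this by arguing on the dual side: since $\Ext^1_R(X,R)=0$ for special $X$, every map $X\to\Omega^{-1}R$ factors through $\add\omega$, so AR duality gives $\Ext^1_R(\tau^{-1}\Omega^{-1}R,X)=0$, and $\tau^{-1}\Omega^{-1}R\cong(\Omega\omega)^*\cong\bigoplus_{j}M_j^{\oplus -E_j^2-2}$ is therefore relatively projective.

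For the converse you rely on the claim that $\tau M_i\in\SCM(R)$ when $E_i$ is a $(-2)$-curve. This is false, so the almost split sequence of $\CM(R)$ is not the witness you want. For $R=k[[x,y]]^{\frac{1}{5}(1,2)}$ (dual graph one $(-3)$- and one $(-2)$-curve) the special corresponding to the $(-2)$-curve is $M_1=(x,y^3)$, and $\tau M_1\cong(M_1\otimes\omega)^{**}\cong M_3=(xy,x^3,y^4)$ is minimally $3$-generated of rank one, hence not special; the same phenomenon occurs for $\frac{1}{9}(1,2)$ and in general whenever the singularity is not Gorenstein. Indeed, Proposition~\ref{recon_relation} shows that the \emph{relative} almost split sequence in $\SCM(R)$ ending at such an $M_i$ has left-hand term $M_i$ itself, not $\tau M_i$: the relative translate genuinely differs from $\tau$. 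The paper instead manufactures the non-split sequence $0\to M_i\to Y\to M_i\to0$ with all terms in $\SCM(R)$ from the minimal projective resolution of the simple module $S_i$ over the reconstruction algebra $A=\End_R(R\oplus\bigoplus_jM_j)$, using the computations $\Ext^3_A(S_i,-)=0$ and $\Ext^2_A(S_i,S_j)=\delta_{ij}$ for a $(-2)$-curve from \cite[3.3]{Wemyss_GL2}. Some input of this kind is unavoidable, and your proposal does not supply a substitute for it.
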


We divide the proof into Lemmas \ref{-2} and \ref{non-2}.  For the first we require the following well-known observation.
\begin{lemma}\label{3.5new}
Let $R$ be a noetherian ring and $M\in \mod R$.
If $R\in \add M$, then the functor
\[ 
\Hom_R(M,-): \mod R\to \mod \End_R(M)
\]
is fully faithful, restricting to an equivalence
\[ 
\add M\to \proj \End_R(M).
\] 
\end{lemma}

\begin{lemma}\label{-2}
If $E_i$ is a $(-2)$--curve, then $M_i$ is not relatively projective in $\SCM(R)$ and further
$\Ext^1_R(M_i,M_i)\neq0$.
\end{lemma}
\begin{proof}
Denote $M:=R\oplus\bigoplus_{j\in I}M_j$ and let $A:=\End_R(M)$.   We show that $\Ext^1_R(M_i,M_i)\neq 0$ by using the fact that $A$ is derived equivalent to the minimal resolution (Theorem~\ref{derivedequiv}).  For all $j\in I$ denote $S_j$ to be the simple at the vertex corresponding to $M_j$ in the quiver of $A$ (i.e.\ $S_j$ is the top of $\Hom_R(M,M_j)$) and let $S_\star$ be the simple corresponding to $R$ in the quiver of $A$ (i.e.\ $S_\star$ is the top of $\Hom_R(M,R)$).  Then by inspecting the proof of \cite[Thm 3.1]{Wemyss_GL2} we see that $\Ext^3_A(S_i,S_j)=0$ for all $j\in I$ and further $\Ext^3_A(S_i,S_\star)=-E_i^2-2$.  Thus since $M_i$ corresponds to a $(-2)$ curve, $\Ext^3_A(S_i,-)=0$ against all simple $A$ modules and so $\pd_A S_i=2$.  Consider now the minimal projective resolution of the $A$-module $S_i$, which by Lemma~\ref{3.5new} has the form 
\[
0\to \Hom_R(M,T) \to \Hom_R(M,Y) \to\Hom_R(M,M_i)  \to S_i\to 0.
\]
We know that this comes from a non-split exact sequence
\[
0\to T\to Y\to M_i\to 0
\]
with $T,Y\in\add M=\SCM(R)$.  Now by \cite[Thm 3.1]{Wemyss_GL2} we know that $\Ext^2_A(S_i, S_\star)=((Z_K-Z_f)\cdot E_i)_{-}=0$ (since $Z_K\cdot E_i=0$ and so $(Z_K-Z_f)\cdot E_i\geq 0$), $\Ext^2_A(S_i, S_j)=0$ if $i\neq j$ and further $\Ext^2_A(S_i, S_i)=-E_i^2-1=1$ since $E_i$ is a $(-2)$-curve. Consequently $\Hom_R(M,T)\cong\Hom_R(M,M_i)$ and so $T\cong M_i$ by Lemma~\ref{3.5new}.  Hence $\Ext^1_R(M_i,M_i)\neq 0$, as required.
\end{proof}

\begin{lemma}\label{non-2}
If $E_i$ is not a $(-2)$-curve, then $M_i$ is relatively projective in $\SCM(R)$ and $\Ext^1_R(M_i,M_i)=0$.
\end{lemma}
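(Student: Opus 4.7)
Suppose $E_i$ is not a $(-2)$-curve, so $-E_i^2-2\geq 1$. I aim to show $\Ext^1_R(M_i,X)=0$ for every $X\in\SCM(R)$; specialising to $X=M_i$ then gives the second assertion. Since every $X\in\SCM(R)$ decomposes as a direct sum of copies of $R$ and the indecomposable $M_j$, and $\Ext^1_R(M_i,R)=0$ is immediate from Theorem~\ref{characterization of SCM}(5), it suffices to show $\Ext^1_R(M_i,M_j)=0$ for every $j\in I$.

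My strategy invokes Auslander--Reiten duality in $\CM(R)$: $D\Ext^1_R(M_i,M_j)\cong\u{\Hom}_R(M_j,\tau M_i)$, where $\tau M_i=\Omega((M_i\otimes\omega)/{\bf T})$ is the AR translate. Combining Theorem~\ref{syzygy} applied to $(M_i\otimes\omega)/{\bf T}$ (whose chern class pairing with each $E_k$ is $\delta_{ik}+K_{\w{X}}\cdot E_k=\delta_{ik}+(-E_k^2-2)$ by Wunram's characterisation and the adjunction formula) with Corollary~\ref{chern example}(1), we obtain
\[
\tau M_i\cong M_i^*\oplus\Omega\omega.
\]
The non-$(-2)$ hypothesis is crucial here: it forces $M_i^*\cong\Omega M_i$ to itself be a direct summand of $\Omega\omega$ (with multiplicity $-E_i^2-2\geq 1$), so the required vanishing $\u{\Hom}_R(M_j,\tau M_i)=0$ collapses to the single statement $\u{\Hom}_R(M_j,\Omega\omega)=0$ for every $j\in I$. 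A second application of AR duality, now to $\omega$ itself (with $\tau\omega\cong(\Omega\omega)^{\oplus 2}$ computed analogously from Theorem~\ref{syzygy} applied to $(\omega\otimes\omega)/{\bf T}$), translates this stable-Hom vanishing into the equivalent condition $\Ext^1_R(\omega,M_j)=0$ for every $j\in I$.

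The hard part is thus establishing $\Ext^1_R(\omega,M_j)=0$ for each $j\in I$. Local duality immediately yields $\Ext^1_R(M_j,\omega)=0$ for any $M_j\in\CM(R)$ of full depth, but the opposite direction is genuinely delicate since $\omega$ need not itself be special (cf.\ Corollary~\ref{chern example}(2) for precisely when it is). My plan to close this step is to parallel the strategy of Lemma~\ref{-2}: using the explicit $\Ext$ calculations of \cite[3.3]{Wemyss_GL2} over the reconstruction algebra $A=\End_R(M)$ together with a chern-class bookkeeping argument, one should be able to deduce $\Ext^1_R(\omega,M_j)=0$ from the fact that, in the non-$(-2)$ case, the third syzygy in the minimal $A$-projective resolution of $S_i$ consists purely of free summands, exactly the structural feature that was absent in the $(-2)$ setting of Lemma~\ref{-2}.
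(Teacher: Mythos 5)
There is a genuine gap here, on two levels. First, the reductions rest on an incorrect formula for the Auslander--Reiten translate: for a two-dimensional isolated singularity $\tau M\cong\Hom_R(\Omega^2\Tr M,\omega)\cong\Hom_R(M^*,\omega)\cong(M\otimes_R\omega)^{**}$ --- there is no syzygy functor in front, so $\tau M_i$ is \emph{not} $\Omega((M_i\otimes\omega)/{\bf T})$ and the decomposition $\tau M_i\cong M_i^*\oplus\Omega\omega$ does not follow. Even setting that aside, your application of Theorem~\ref{syzygy} to $(\omega\otimes\omega)/{\bf T}$ is illegitimate: that theorem applies to CM modules, and $(\omega\otimes\omega)/{\bf T}$ is CM only when $\omega$ is special, which by Corollary~\ref{chern example}(2) happens only for one very particular dual graph. (You also drop a factor of $\t{rk}M_i$ in the chern class of $M_i\otimes\omega$, since $c_1(M\otimes\omega)\cdot E_k=c_1(M)\cdot E_k+\t{rk}(M)\,(K_{\w{X}}\cdot E_k)$.) Second, and decisively, the entire argument funnels into the claim $\Ext^1_R(\omega,M_j)=0$ for all $j$, which you explicitly defer with ``one should be able to deduce''; that is precisely where the content of the lemma would have to live, and it is not established. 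As written the proposal is a chain of (partly faulty) reductions terminating in an unproved assertion.

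For comparison, the paper runs the AR duality in the opposite direction, which is what makes the argument close. Rather than computing $\tau M_i$ and testing $\u{\Hom}(-,\tau M_i)$ against each $M_j$, it proves the general implication that $\Ext^1_R(X,Y)=0$ forces $\Ext^1_R(\tau^{-1}\Omega^{-1}Y,X)=0$ (every map $X\to\Omega^{-1}Y$ factors through $\add\omega$, then apply AR duality), and feeds in the one vanishing that is already known for free, namely $\Ext^1_R(X,R)=0$ for all $X\in\SCM(R)$ from Theorem~\ref{characterization of SCM}(5). This shows the single module $\tau^{-1}\Omega^{-1}R=(\Omega\omega)^*$ is relatively projective, and Corollary~\ref{chern example}(1) identifies each $M_i$ with $E_i$ not a $(-2)$-curve as a direct summand of $(\Omega\omega)^*$. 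If you want to salvage your approach, the lesson is to avoid ever needing a new $\Ext$-vanishing statement about $\omega$: start from $\Ext^1_R(\SCM(R),R)=0$ and let duality do the rest.
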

\begin{proof}
Firstly note that for all $X,Y\in\CM(R)$, if $\Ext_R^1(X,Y)=0$ then necessarily $\Ext^1_R(\tau^{-1}\Omega^{-1}Y,X)=0$.  To see this, just take the short exact sequence $0\to Y\to I\to \Omega^{-1}Y \to 0$ with $I\in\add\omega$ and apply $\Hom_R(X,-)$ to get
\[
0\to \Hom_R(X,Y)\to \Hom_R(X,I)\to \Hom_R(X,\Omega^{-1}Y) \to \Ext^1_R(X,Y)=0.
\]
Consequently every map from $X$ to $\Omega^{-1}Y$ factors through an injective object and hence by AR  duality $0=D\overline{\Hom}_R(X,\Omega^{-1}Y)=\Ext^1_R(\tau^{-1}\Omega^{-1}Y,X)$.

Now if $X\in\SCM(R)$ then $\Ext^1_R(X,R)=0$ and so applying the above with $Y=R$ we get $\Ext^1_R(\tau^{-1}\Omega^{-1}R,X)=0$.  But $\tau^{-1}\Omega^{-1}R=\Hom_R(\Omega^{-1}R,\omega)^*=(\Omega \omega)^*$ and so this shows that $\Ext^1_R((\Omega \omega)^*,-)=0$ on $\SCM(R)$, hence $(\Omega\omega)^*$ is relatively projective.  But now by Corollary~\ref{chern example},
we know that $(\Omega\omega)^*$ has as summands all the indecomposable special CM modules corresponding to non-$(-2)$ curves and thus all of them are relatively projective. 
\end{proof}

This completes the proof of Theorem \ref{-2 curve}.
\qed

\medskip
We now show the following existence theorem of almost split sequences in $\SCM(R)$.  The theory of almost split sequences in subcategories was first developed by Auslander and Smal\o\, \cite{AS} for finite dimensional algebras; here our algebras are not finite dimensional, but the proofs are rather similar.

Below we denote by $J_{\CM(R)}$ the \emph{Jacobson radical} of the category $\CM(R)$ (e.g. \cite{ARS}), so $J_{\CM(R)}(X,Y)$ consists of non-isomorphic morphisms $X\to Y$ for any indecomposable CM $R$-modules $X$ and $Y$.
\begin{prop}\label{recon_relation}
Let $i\in I$. Then $M_i$ is not relatively projective if and only if there exists an exact sequence
\[0\to M_i\xrightarrow{g} Y\xrightarrow{f} M_i\to0\]
such that the sequences
\begin{eqnarray*}
&0\to\Hom_R(-,M_i)\xrightarrow{\cdot g}\Hom_R(-,Y)\xrightarrow{\cdot f}J_{\CM(R)}(-,M_i)\to0,&\\
&0\to\Hom_R(M_i,-)\xrightarrow{f\cdot }\Hom_R(Y,-)\xrightarrow{g\cdot }J_{\CM(R)}(M_i,-)\to0.&
\end{eqnarray*}
are exact on $\SCM(R)$.
\end{prop}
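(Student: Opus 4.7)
The reverse implication is straightforward: evaluating the first functorial sequence at $X=M_i$ shows that the cokernel of $f_*:\Hom_R(M_i,Y)\to\Hom_R(M_i,M_i)$ is $\End_R(M_i)/\t{rad}\,\End_R(M_i)\cong k$, so $\t{id}_{M_i}$ does not lie in the image of $f_*$. Hence the short exact sequence is non-split and $\Ext^1_R(M_i,M_i)\neq 0$, forcing $M_i$ to not be relatively projective.

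For the forward direction, suppose $M_i$ is not relatively projective, so by Theorem~\ref{-2 curve} the exceptional curve $E_i$ is a $(-2)$-curve. Write $M:=R\oplus\bigoplus_{j\in I}M_j$ and $A:=\End_R(M)$; then $\SCM(R)=\add M$, and the Yoneda functor $\Hom_R(M,-):\add M\to\mod A$ is fully faithful. The calculations carried out in the proof of Lemma~\ref{-2} establish that $\Ext^3_A(S_i,-)$ vanishes on all simples and that $\Ext^2_A(S_i,S_j)=0$ for $j\neq i$ (including the $\star$-vertex) while $\Ext^2_A(S_i,S_i)=k$. Consequently $\pd_A S_i=2$ and the top of $\Omega^2 S_i$ is $S_i$ with multiplicity one, so the minimal projective resolution of $S_i$ has the shape
\[0\to\Hom_R(M,M_i)\to\Hom_R(M,Y)\to\Hom_R(M,M_i)\to S_i\to 0\]
for some $Y\in\add M$. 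Applying the inverse of Yoneda produces the desired short exact sequence $0\to M_i\xrightarrow{g}Y\xrightarrow{f}M_i\to 0$ in $\SCM(R)$.

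For the first exactness property, evaluate this projective resolution at each indecomposable summand $X$ of $M$: the cokernel of $f_*:\Hom_R(X,Y)\to\Hom_R(X,M_i)$ is $e_X S_i$, which equals the residue field when $X\cong M_i$ and vanishes otherwise, matching $\Hom_R(X,M_i)/J_{\CM(R)}(X,M_i)$ precisely. Combining this with the injectivity of $g_*$ yields the required exactness on $\SCM(R)$.

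The main obstacle is the second exactness, namely that $g$ is left almost split on $\SCM(R)$. My plan is to run the symmetric argument via the contravariant Yoneda functor $\Hom_R(-,M):(\add M)^{\t{op}}\to\mod A^{\t{op}}$, which is also fully faithful; the $\Ext$-symmetry lemma established just before Theorem~\ref{frobenius structure} translates the above $\Ext^\bullet$-computations into identical statements for the right $A$-simple at vertex $i$, producing a minimal right-projective resolution of the same shape. Applying the contravariant inverse Yoneda then witnesses the same sequence $0\to M_i\to Y\to M_i\to 0$ as yielding the left almost split behaviour of $g$. Alternatively, since $\SCM(R)$ is a Krull--Schmidt functorially finite extension-closed subcategory of $\CM(R)$ which has AR sequences, the general Auslander--Smal\o\, theory \cite{AS} guarantees an AR sequence in $\SCM(R)$ ending at $M_i$ that is automatically two-sided almost split, and uniqueness of right almost split epimorphisms then identifies this AR sequence with the explicit sequence constructed above.
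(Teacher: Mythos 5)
Your forward direction rests on the same key input as the paper's proof --- the computation of $\Ext^2_A(S_i,-)$ and $\Ext^3_A(S_i,-)$ from \cite[3.3]{Wemyss_GL2} --- but you organize it in the opposite order: you read off the whole sequence $0\to M_i\to Y\to M_i\to 0$ from the minimal projective resolution of $S_i$ and are then left having to prove it is left almost split, whereas the paper first produces a minimal right almost split map $f\colon Y\to M_i$ (using that $\SCM(R)$ has only finitely many indecomposables), shows its kernel $Z$ lies in $\SCM(R)$ by a mapping-cone/extension-closure argument, invokes the routine fact that a non-split exact sequence whose right map is minimal right almost split is an almost split sequence (so $g$ is automatically minimal left almost split), and only at the end uses the $\Ext^2_A$ computation to identify $Z\cong M_i$. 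Your reverse implication (which the paper omits) and your verification of the first exactness are fine.

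The weak point is your primary route to the second exactness. The symmetry lemma proved before Theorem \ref{-2 curve} concerns $\Ext^1_R(X,Y)$ for special $R$-modules $X,Y$; it says nothing about $\Ext^2_{A^{\rm op}}$ or $\Ext^3_{A^{\rm op}}$ of the simple right $A$-module at vertex $i$, and the computations quoted from \cite[3.3]{Wemyss_GL2} are one-sided ($\Ext^\bullet_A(S_i,S_j)$ for varying $j$, not $\Ext^\bullet_A(S_j,S_i)$ or $\Ext^\bullet_A(S_\star,S_i)$), so ``identical statements for the right $A$-simple'' do not follow without further argument. Your fallback --- existence of an AR sequence in $\SCM(R)$ ending at $M_i$, plus uniqueness of minimal right almost split maps --- does close the gap and is in substance the paper's proof; but note the paper explicitly cautions that \cite{AS} does not apply verbatim here because the algebras are not finite dimensional. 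The content you are citing away is precisely the verification that the kernel of the minimal right almost split map stays inside $\SCM(R)$, which the paper supplies by comparing with a relatively projective presentation of $M_i$ and taking a mapping cone. With that step made explicit, your argument is correct.
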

\begin{proof}
Suppose $M_i$ not relatively projective.  We firstly show that there exists an almost split sequence $0\to Z\to Y\to M_i\to 0$ in $\SCM(R)$.  Since there are only finitely many indecomposable objects in $\SCM(R)$, certainly there exists an exact sequence 
\[
0\to Z\stackrel{g}\to Y\stackrel{f}\to M_i\to 0
\]
with $Y\in\SCM(R)$ and $f$ a minimal right almost split map in $\SCM(R)$.  We claim that $Z\in\SCM(R)$ and further $g$ is a minimal left almost split map.

Since $M_i$ is not relatively projective, there exists an exact sequence
\[
0\to Z^\prime\to Y^\prime\to M_i\to 0
\]
with $Z^\prime\in\SCM(R)$.  Since $f$ is right almost split we have a commutative diagram 
\[
{\SelectTips{cm}{10}
\xy0;/r.28pc/:
(0,0)*+{0}="0",(10,0)*+{Z^\prime}="1",(20,0)*+{Y^\prime}="2",(30,0)*+{M_i}="3", (40,0)*+{0}="4" , 
(0,-8)*+{0}="b0",(10,-8)*+{Z}="b1",(20,-8)*+{Y}="b2",(30,-8)*+{M_i}="b3",(40,-8)*+{0}="b4" ,
\ar"0";"1"
\ar"1";"2"
\ar"2";"3"
\ar"3";"4"
\ar"1";"b1"
\ar"2";"b2"
\ar@{=}"3";"b3"
\ar"b0";"b1"
\ar"b1";"b2"
\ar^f"b2";"b3"
\ar"b3";"b4"
\endxy}
\]
and so taking the mapping cone gives the short exact sequence
\[
0\to Z^\prime\to Z\oplus Y^\prime\to Y\to 0.
\]
Since $\SCM(R)$ is closed under extensions we conclude that $Z\in\SCM(R)$.  The fact $g$ is a minimal left almost split map is now routine (see e.g. \cite[2.14]{Y}).

To finish the proof we must show that $Z\cong M_i$.  But as in the proof of Lemma~\ref{-2}, the above gives a minimal projective resolution
\[
0\to\Hom_R(M,Z)\to\Hom_R(M,Y)\to\Hom_R(M,M_i)\to S_i\to 0
\]
of the simple $A$-module $S_i$.  By \cite[Thm 3.1]{Wemyss_GL2} we know that $\Ext^2_A(S_i,S)=0$ for any simple $A$-module $S\neq S_i$.  Thus $\Hom_R(M,Z)\cong\Hom_R(M,M_i)$ and so by Lemma~\ref{3.5new} $Z\cong M_i$, as required.
\end{proof}

The following property of `Auslander algebras' of triangulated categories is useful.

\begin{prop}\label{triangulated Auslander algebra}
Let $\TT$ be a $\Hom$-finite $k$-linear triangulated category $\TT$ with an additive generator $M$.
Then $B:=\End_{\TT}(M)$ is a self-injective $k$-algebra.
\end{prop}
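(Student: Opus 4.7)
The plan is to deduce self-injectivity of $B$ from the stronger assertion that every finitely generated $B$-module embeds into a finitely generated projective $B$-module. The only non-routine input will be the triangulated structure on $\TT$, which supplies a distinguished triangle whose third term controls the cokernel of an arbitrary morphism of projective $B$-modules.

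First I would set up the Yoneda equivalence $F = \Hom_{\TT}(M,-) : \TT \xrightarrow{\sim} \proj B$ onto the category of finitely generated projective right $B$-modules; this is an equivalence because $M$ is an additive generator of the $\Hom$-finite Krull-Schmidt category $\TT$, so every object of $\TT$ is a summand of $M^{\oplus n}$ for some $n$. In particular, any morphism between finitely generated projective $B$-modules is of the form $F(\tilde g)$ for a unique morphism $\tilde g$ in $\TT$.

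Next, given any finitely generated $B$-module $N$, choose a projective presentation $P^1 \xrightarrow{g} P^0 \to N \to 0$, lift $g$ to a morphism $\tilde g : X \to Y$ in $\TT$, and complete it to a distinguished triangle
\[ X \xrightarrow{\tilde g} Y \to C \to X[1]. \]
Applying $F$ produces a long exact sequence of $B$-modules
\[ P^1 \xrightarrow{g} P^0 \to F(C) \to F(X[1]) \to F(Y[1]) \to \cdots \]
in which $F(C)$ is projective (because $C\in\TT$). Exactness at $F(C)$ identifies $\Cok g = N$ with a submodule of $F(C)$, giving the desired embedding of $N$ into a projective $B$-module.

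To conclude, apply this with $N$ equal to an arbitrary indecomposable injective $B$-module $I$. The resulting embedding $I \hookrightarrow F(C)$ splits by the injectivity of $I$, exhibiting $I$ as a direct summand of the projective module $F(C)$; being indecomposable, Krull-Schmidt forces $I$ itself to be an indecomposable projective $B$-module. Since the numbers of isomorphism classes of indecomposable injective and indecomposable projective modules agree over a finite-dimensional algebra, the two classes coincide, so every indecomposable projective $B$-module is also injective, proving that $B$ is self-injective. The only point to check with any care is that the triangulated cone $C$ genuinely lies in $\TT$ and hence corresponds to a projective $B$-module under $F$, but this is immediate from the definition of a triangulated category together with the equivalence $\TT\simeq\proj B$.
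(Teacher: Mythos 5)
Your argument is correct, and it shares its essential triangulated input with the paper's proof -- namely, lifting a projective presentation $P^1\xrightarrow{g}P^0\to N\to 0$ along the projectivization functor $F=\Hom_{\TT}(M,-)$ and completing the lifted map to a distinguished triangle -- but the two proofs extract self-injectivity differently. The paper rotates the triangle the other way (producing $M_2\to M_1\to M_0\to M_2[1]$, i.e.\ one more step of the projective resolution), applies $\Hom_B(-,B)$, and uses the natural isomorphism $\Hom_B(\Hom_{\TT}(M,N),B)\cong\Hom_{\TT}(N,M)$ to transport exactness of $\Hom_{\TT}(-,M)$ on the triangle into the vanishing $\Ext^1_B(X,B)=0$ for every finitely generated $X$, which is self-injectivity on the nose. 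You instead use the covariant long exact sequence of the cone $C$ to embed $\Cok g=N$ into the projective module $F(C)$, conclude that every indecomposable injective is a summand of a projective, and finish with the counting argument (equal numbers of indecomposable injectives and projectives over a finite-dimensional algebra). Both routes are valid; the paper's is slightly more direct in that it avoids the final counting step, while yours isolates the reusable fact that over such a $B$ every finitely generated module embeds into a projective. Two small points of hygiene: the injectivity of the induced map $N\to F(C)$ comes from exactness of the long exact sequence at $F(Y)=P^0$, not at $F(C)$ as you wrote; and the identification of the essential image of $F$ with all of $\proj B$ implicitly uses that idempotents split in $\TT$ -- harmless here, since you may take $P^0,P^1$ free and only need $F(C)$ to be projective, which follows from $C\in\add M$.
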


\begin{proof}
For any $X\in\mod B$, we can take a projective resolution
\[
\Hom_{\TT}(M,M_1)\xrightarrow{\cdot f}\Hom_{\TT}(M,M_0)\to X\to0.
\]
Take a triangle $M_2\xrightarrow{g} M_1\xrightarrow{f} M_0\to M_2[1]$,
then we continue a projective resolution
\[
\Hom_{\TT}(M,M_2)\xrightarrow{\cdot g}\Hom_{\TT}(M,M_1)\xrightarrow{\cdot f}\Hom_{\TT}(M,M_0)\to X\to0.
\]
Applying $\Hom_B(-,B)$ gives the commutative diagram
\[
{\SelectTips{cm}{10}
\xy0;/r.4pc/:
(0,0)*+{\Hom_B(\Hom_{\TT}(M,M_0),B)}="1",(26,0)*+{\Hom_B(\Hom_{\TT}(M,M_1),B)}="2",(52,0)*+{\Hom_B(\Hom_{\TT}(M,M_2),B)}="3",
(0,-6)*+{\Hom_{\TT}(M_0,M)}="b1",(26,-6)*+{\Hom_{\TT}(M_1,M)}="b2",(52,-6)*+{\Hom_{\TT}(M_2,M)}="b3"
\ar"1";"2"
\ar"2";"3"
\ar"1";"b1"
\ar"2";"b2"
\ar"3";"b3"
\ar^{f\cdot }"b1";"b2"
\ar^{g\cdot }"b2";"b3"
\endxy}
\]
where all vertical maps are isomorphisms.  Since the lower sequence is exact (by properties of triangles), so is the top.  Hence $\Ext^1_B(X,B)=0$ and so $B$ is self-injective.
\end{proof}

We deduce the following results on our triangulated category $\u{\u{\SCM}}(R)$ and the stable reconstruction algebra $\End_{\u{\u{\SCM}}(R)}(\bigoplus_{i\in I} M_i)$.

\begin{cor}
\t{(1)} The AR quiver of the category $\u{\u{\SCM}}(R)$ is a disjoint union of the double of Dynkin diagrams, corresponding to the subconfigurations of $(-2)$-curves in the minimal resolution.\\
\t{(2)} The algebra $\End_{\u{\u{\SCM}}(R)}(\bigoplus_{i\in I} M_i)$ is a factor algebra of the reconstruction algebra $\End_R(R\oplus\bigoplus_{i\in I} M_i)$ by the ideal generated by idempotents corresponding to $R$ and the non-$(-2)$-curves. \\
\t{(3)} The algebra $\End_{\u{\u{\SCM}}(R)}(\bigoplus_{i\in I} M_i)$ is self-injective, and the quiver is a disjoint union of the double of Dynkin diagrams.
\end{cor}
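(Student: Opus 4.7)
My plan is to treat (2) first as an immediate consequence of Theorem \ref{-2 curve}, then derive (3) via Proposition \ref{triangulated Auslander algebra}, and finally prove (1) by combining Proposition \ref{recon_relation} with an intersection-number computation. For (2): let $A=\End_R(R\oplus\bigoplus_{i\in I}M_i)$ be the reconstruction algebra and $e\in A$ the idempotent projecting onto $R\oplus\bigoplus_{E_j\text{ not }(-2)}M_j$. By Theorem \ref{-2 curve}, together with the fact that $R$ is always relatively projective-injective, this summand exhausts a complete system of indecomposable relatively projectives in $\SCM(R)$, so a morphism $\bigoplus_i M_i\to\bigoplus_i M_i$ factors through a relatively projective object precisely when its image in $\bar{e}A\bar{e}=\End_R(\bigoplus_i M_i)$ lies in $\bar{e}AeA\bar{e}$. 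This gives $\End_{\u{\u{\SCM}}(R)}(\bigoplus_i M_i)\cong A/AeA$, which is the quotient described in (2).

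For (3), I apply Proposition \ref{triangulated Auslander algebra} with $\TT=\u{\u{\SCM}}(R)$ and additive generator $\bigoplus_{i\in I}M_i$; this really is an additive generator because, by Theorem \ref{-2 curve}, the only nonzero indecomposables in the stable category are the $M_i$ with $E_i$ a $(-2)$-curve. The Hom-finiteness over $k$ required by Proposition \ref{triangulated Auslander algebra} is inherited from the classical finite-dimensionality of stable Hom-spaces between CM modules over a complete local CM ring, since $\u{\u{\SCM}}(R)$ is a further quotient of $\underline{\SCM}(R)$. Proposition \ref{triangulated Auslander algebra} then yields immediately that $B$ is self-injective; combined with (1) the quiver description in (3) follows at once.

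For (1), I would exploit the AR sequences of Proposition \ref{recon_relation}: for each $(-2)$-curve $E_i$ there is an almost split sequence $0\to M_i\to Y_i\to M_i\to 0$ in $\SCM(R)$ descending to an AR triangle in $\u{\u{\SCM}}(R)$, so $\tau M_i\cong M_i$ and the AR quiver is determined by the multiplicities of the $M_j$ as summands of $Y_i$. This multiplicity equals $\dim_k\Ext^1_R(M_i,M_j)$. The critical computation is to show this dimension equals $E_i\cdot E_j$ for $i\neq j$ (and vanishes for $i=j$), which I would carry out via the Ext-quiver analysis from the proof of Lemma \ref{-2} using \cite[3.3]{Wemyss_GL2}. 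The conclusion then follows from the standard fact that any connected subconfiguration of $(-2)$-curves on the minimal resolution of a rational surface singularity is an ADE Dynkin configuration (its intersection form being negative-definite with $(-2)$-diagonal): each such connected component contributes exactly the double of the corresponding Dynkin diagram to the AR quiver.

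The main obstacle will be the middle-term computation in (1), namely confirming that the multiplicity of $M_j$ in $Y_i$ is governed by $E_i\cdot E_j$ through the Ext-quiver of \cite[3.3]{Wemyss_GL2}. A secondary technical point is verifying the Hom-finiteness of $\u{\u{\SCM}}(R)$ needed to invoke Proposition \ref{triangulated Auslander algebra}.
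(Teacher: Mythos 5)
Your treatment of (2) and (3) matches the paper's: (2) is exactly the content behind the paper's ``this is clear'' (the relatively projective-injectives are $\add$ of $R\oplus\bigoplus_{\text{non-}(-2)}M_j$ by Theorem \ref{-2 curve}, so stable morphisms are morphisms modulo $\bar{e}AeA\bar{e}$), and (3) is the same appeal to Proposition \ref{triangulated Auslander algebra}. For (1) you take a genuinely more computational route. The paper disposes of (1) in one line: a subtree of a rational tree is rational, and a rational tree consisting entirely of $(-2)$-curves is Dynkin; alternatively it cites Xiao--Zhu for the general fact that the AR quiver of a Hom-finite triangulated category of finite type is a union of Dynkin doubles. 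Your route --- descending the almost split sequences of Proposition \ref{recon_relation} to AR triangles with $\tau M_i\cong M_i$ and reading off the middle terms from the Ext-quiver of the reconstruction algebra --- is more work but yields strictly more: it identifies the underlying graph of each component \emph{as} the dual graph of the corresponding $(-2)$-subconfiguration, which the paper's one-line argument only gives implicitly via (2) and the known quiver of the reconstruction algebra.

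One correction to your plan for (1): the multiplicity of $M_j$ in the middle term $Y_i$ of the AR sequence ending at $M_i$ is \emph{not} $\dim_k\Ext^1_R(M_i,M_j)$; it is the dimension of the space of irreducible maps $M_j\to M_i$, i.e.\ $\dim_k\Ext^1_A(S_i,S_j)$ for the reconstruction algebra $A$. The two genuinely differ here: for $i=j$ with $E_i$ a $(-2)$-curve, Lemma \ref{-2} shows $\Ext^1_R(M_i,M_i)\neq0$ (this is exactly the class of the AR sequence), yet the multiplicity of $M_i$ in $Y_i$ is zero since the quiver of $A$ has no loop at a $(-2)$-vertex. Your stated fallback --- running the computation through $\Ext^1_A(S_i,S_j)$ via \cite[3.3]{Wemyss_GL2}, which gives $E_i\cdot E_j$ arrows for $i\neq j$ and none for $i=j$ --- is the correct mechanism, so the slip is localized and repairable, but as written the identification is false.
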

\begin{proof}
(1) A subtree of a rational tree is rational (see e.g. \cite[3.2]{TT}), thus the remaining ($-2$)-configurations are all Dynkin diagrams.  Alternatively, it is well-known that the AR quiver of a $\Hom$-finite $k$-linear triangulated category of finite type is a disjoint union of Dynkin diagrams \cite{XZ}.\\
(2) This is clear.\\
(3) Immediate from  Lemma \ref{triangulated Auslander algebra} since $\bigoplus_{i\in I} M_i$ is an additive generator of the triangulated category $\u{\u{\SCM}}(R)$.
\end{proof}

The following are examples which illustrate the above results.  Note that the quiver of the reconstruction algebra follows easily from combinatorics on the dual graph, see \cite{Wemyss_GL2} for details.
\begin{example}
\[
\begin{tikzpicture}
\node at (-3.75,2) {\rm Dual Graph};
\node at (0,2) {\rm Reconstruction Algebra};
\node at (4.25,1.95) {\rm AR quiver of $\u{\u{\SCM R}}$};
\node at (-3.75,0.3) 
{\begin{tikzpicture}[xscale=0.75,yscale=0.75]
 \node (0) at (0,0) [vertex] {};
 \node (1) at (1,0) [vertex] {};
 \node (1b) at (1,1) [vertex] {};
 \node (2) at (2,0) [vertex] {};
  \node (3) at (3,0) [vertex] {};
 \node (0a) at (-0.1,-0.3) {$\scriptstyle -2$};
 \node (1a) at (0.9,-0.3) {$\scriptstyle -2$};
 \node (1ba) at (0.55,1) {$\scriptstyle -2$};
 \node (2a) at (1.9,-0.3) {$\scriptstyle -2$};
  \node (3a) at (2.9,-0.3) {$\scriptstyle -4$};
 \draw [-] (0) -- (1);
\draw [-] (1) -- (2);
\draw [-] (2) -- (3);
\draw [-] (1) -- (1b);
\end{tikzpicture}};
\node at (0,0) {\begin{tikzpicture}[xscale=1,yscale=1,bend angle=12.5, looseness=1]
 \node (1) at (1,0) [vertex] {};
 \node (1b) at (2,1) [vertex] {};
 \node (2) at (2,0)  [vertex] {};
 \node (3) at (3,0)  [vertex] {};
 \node (4) at (4,0)  [vertex] {};
\node (R) at (3,-1) {$\scriptstyle \star$};
\draw [bend right,->] (1) to (2);
\draw [bend right,->] (2) to (1);
\draw [bend right,->] (2) to (3);
\draw [bend right,->] (3) to (2);
\draw [bend right,->] (3) to (4);
\draw [bend right,->] (4) to (3);
\draw [bend right,->] (2) to (1b);
\draw [bend right,->] (1b) to (2);
\draw [bend right,->] (3) to (R);
\draw [bend right,->] (R) to (3);
\draw [green,->] ($(4)+(-130:5pt)$) -- ($(R)+(50:4pt)$);
\draw [green,->] ($(4)+(-110:5pt)$) -- ($(R)+(30:4pt)$);
\end{tikzpicture}};
\node at (4.25,0) {\begin{tikzpicture}[xscale=1,yscale=1,bend angle=12.5, looseness=1]
 \node (1) at (1,0) [vertex] {};
 \node (1b) at (2,1) [vertex] {};
 \node (2) at (2,0)  [vertex] {};
 \node (3) at (3,0)  [vertex] {};
  \node (4) at (4,0)  {};
\node (R) at (3,-1) {};
\draw [bend right,->] (1) to (2);
\draw [bend right,->] (2) to (1);
\draw [bend right,->] (2) to (3);
\draw [bend right,->] (3) to (2);
\draw [bend right,->] (2) to (1b);
\draw [bend right,->] (1b) to (2);
\end{tikzpicture}};
\end{tikzpicture}
\]
\[
\begin{tikzpicture}
\node at (-3.75,0.3) 
{\begin{tikzpicture}[xscale=0.75,yscale=0.75]
 \node (0) at (0,0) [vertex] {};
 \node (1) at (1,0) [vertex] {};
 \node (1b) at (1,1) [vertex] {};
 \node (2) at (2,0) [vertex] {};
  \node (3) at (3,0) [vertex] {};
 \node (0a) at (-0.1,-0.3) {$\scriptstyle -2$};
 \node (1a) at (0.9,-0.3) {$\scriptstyle -2$};
 \node (1ba) at (0.55,1) {$\scriptstyle -2$};
 \node (2a) at (1.9,-0.3) {$\scriptstyle -4$};
  \node (3a) at (2.9,-0.3) {$\scriptstyle -2$};
 \draw [-] (0) -- (1);
\draw [-] (1) -- (2);
\draw [-] (2) -- (3);
\draw [-] (1) -- (1b);
\end{tikzpicture}};
\node at (0,0) {\begin{tikzpicture}[xscale=1,yscale=1,bend angle=12.5, looseness=1]
 \node (1) at (1,0) [vertex] {};
 \node (1b) at (2,1) [vertex] {};
 \node (2) at (2,0) [vertex] {};
 \node (3) at (3,0) [vertex] {};
\node (4) at (4,0) [vertex] {};
\node (R) at (2,-1) {$\scriptstyle \star$};
\draw [bend right,->] (1) to (2);
\draw [bend right,->] (2) to (1);
\draw [bend right,->] (2) to (3);
\draw [bend right,->] (3) to (2);
\draw [bend right,->] (3) to (4);
\draw [bend right,->] (4) to (3);
\draw [bend right,->] (2) to (1b);
\draw [bend right,->] (1b) to (2);
\draw [bend right,->] (2) to (R);
\draw [bend right,->] (R) to (2);
\draw [green,->] ($(3)+(-120:5pt)$) -- ($(R)+(70:4pt)$);
\draw [bend right=3,->] ($(R)+(25:4pt)$) to ($(4)+(-100:4pt)$);
\draw [bend right=3,->] ($(4)+(-125:4pt)$) to ($(R)+(45:4pt)$);
\end{tikzpicture}};
\node at (4.25,0) {\begin{tikzpicture}[xscale=1,yscale=1,bend angle=12.5, looseness=1]
 \node (1) at (1,0) [vertex] {};
 \node (1b) at (2,1) [vertex] {};
 \node (2) at (2,0)  [vertex] {};
 \node (3) at (3,0)  {};
  \node (4) at (4,0) [vertex] {};
\node (R) at (3,-1) {};
\draw [bend right,->] (1) to (2);
\draw [bend right,->] (2) to (1);
\draw [bend right,->] (2) to (1b);
\draw [bend right,->] (1b) to (2);
\end{tikzpicture}};
\end{tikzpicture}
\]
\[
\begin{tikzpicture}
\node at (-3.75,0.3) 
{\begin{tikzpicture}[xscale=0.75,yscale=0.75]
 \node (0) at (0,0) [vertex] {};
 \node (1) at (1,0) [vertex] {};
 \node (1b) at (1,1) [vertex] {};
 \node (2) at (2,0) [vertex] {};
  \node (3) at (3,0) [vertex] {};
 \node (0a) at (-0.1,-0.3) {$\scriptstyle -2$};
 \node (1a) at (0.9,-0.3) {$\scriptstyle -4$};
 \node (1ba) at (0.55,1) {$\scriptstyle -2$};
 \node (2a) at (1.9,-0.3) {$\scriptstyle -2$};
  \node (3a) at (2.9,-0.3) {$\scriptstyle -2$};
 \draw [-] (0) -- (1);
\draw [-] (1) -- (2);
\draw [-] (2) -- (3);
\draw [-] (1) -- (1b);
\end{tikzpicture}};
\node at (0,0) {\begin{tikzpicture}[xscale=1,yscale=1,bend angle=12.5, looseness=1]
 \node (1) at (1,0) [vertex] {};
 \node (1b) at (2,1)[vertex] {};
 \node (2) at (2,0) [vertex] {};
 \node (3) at (3,0) [vertex] {};
 \node (4) at (4,0) [vertex] {};
\node (R) at (2,-1) {$\scriptstyle \star$};
\draw [bend right,->] (1) to (2);
\draw [bend right,->] (2) to (1);
\draw [bend right,->] (2) to (3);
\draw [bend right,->] (3) to (2);
\draw [bend right,->] (3) to (4);
\draw [bend right,->] (4) to (3);
\draw [bend right,->] (2) to (1b);
\draw [bend right,->] (1b) to (2);
\draw [bend right=5,->] ($(1)+(-55:6pt)$) to ($(R)+(145:4pt)$);
\draw [bend right=5,->] (R) to (1);
\draw [bend right=3,->] ($(4)+(-150:5.5pt)$) to ($(R)+(20:4pt)$);
\draw [bend right=3,->] ($(R)+(5:5.5pt)$) to ($(4)+(-130:5.5pt)$);
\draw[->] (0.5,0) arc (180:116:2.5cm and 1.05cm); 
\draw (0.5,0) arc (-180:-117:2.5cm and 1.05cm);  
\draw (0.3,0) arc (180:112:2.5cm and 1.075cm); 
\draw[->] (0.3,0) arc (-180:-112:2.5cm and 1.075cm);  
\draw[->,green] ($(2)+(-90:4pt)$) -- ($(R)+(90:4pt)$);
\end{tikzpicture}};
\node at (4.25,0.025) {\begin{tikzpicture}[xscale=1,yscale=1,bend angle=12.5, looseness=1]
 \node (1) at (1,0) [vertex] {};
 \node (1b) at (2,1) [vertex] {};
 \node (2) at (2,0)  {};
 \node (3) at (3,0)  [vertex] {};
  \node (4) at (4,0) [vertex] {};
\node (R) at (3,-1) {};
\draw [bend right,->] (3) to (4);
\draw [bend right,->] (4) to (3);
\end{tikzpicture}};
\end{tikzpicture}
\]
\end{example}
\begin{remark}
Since the non $(-2)$-curves (and $R$) die in the quotient, often the AR quiver of $\u{\u{\SCM}}(R)$ has components.  In fact although the number of components is always finite, the number of possible components is arbitrarily large, as can be seen by constructing the following well-known rational tree: for any graph $\Gamma$ with vertices $E_i$ add self-intersection numbers as
\[
E_i^2:=\left\{\begin{array}{cl} -2&\t{if the number of neighbours of $E_i$ is one}\\ -\t{(number of neighbours of $E_i$)}&\t{else } \end{array}  \right.
\]
It is easy to check combinatorially that the above is a rational tree by using a result of Artin \cite[Thm.3]{Art} together with Riemann-Roch (see e.g. \cite[2.4]{TT}).  Thus the above example corresponds to the dual graph of some rational surface singularity and so in particular 
\[
\begin{tikzpicture}[xscale=0.75,yscale=0.75]
\node (0) at (0,0) [vertex] {};
\node (1) at (1,0) [vertex] {};
\node (1b) at (1,1) [vertex] {};
\node (2) at (2,0) [vertex] {};
\node (3) at (3,0) [vertex] {};
\node (3b) at (3,1) [vertex] {};
\node (4) at (4,0) [vertex] {};
\node (5) at (6,0) [vertex] {};
\node (6) at (7,0) [vertex] {};
\node (6b) at (7,1) [vertex] {};
\node (7) at (8,0) [vertex] {};
\node (0a) at (-0.1,-0.3) {$\scriptstyle -2$};
\node (1a) at (0.9,-0.3) {$\scriptstyle -3$};
\node (1ba) at (0.55,1) {$\scriptstyle -2$};
\node (2a) at (1.9,-0.3) {$\scriptstyle -2$};
\node (3a) at (2.9,-0.3) {$\scriptstyle -3$};
\node (2ba) at (2.55,1) {$\scriptstyle -2$};
\node (4a) at (3.9,-0.3) {$\scriptstyle -2$};
\node (5a) at (5.9,-0.3) {$\scriptstyle -2$};
\node (6a) at (6.9,-0.3) {$\scriptstyle -3$};
\node (6ba) at (6.55,1) {$\scriptstyle -2$};
\node (7a) at (7.9,-0.3) {$\scriptstyle -2$};
\draw [-] (0) -- (1);
\draw [-] (1) -- (2);
\draw [-] (2) -- (3);
\draw [-] (3) -- (4);
\draw[densely dotted] (4) -- (5);
\draw [-] (5) -- (6);
\draw [-] (6) -- (7);
\draw [-] (1) -- (1b);
\draw [-] (3) -- (3b);
\draw [-] (6) -- (6b);
\end{tikzpicture}
\]
(where in the region $\hdots$ we repeat the block on the right hand side) corresponds to some rational surface singularity.  On taking the quotient there are many components; increasing the size of the dual graph increases the number of such components.
\end{remark}

\begin{remark} 
Note that the above examples also illustrate that in many cases the category $\u{\u{\SCM}}(R)$ is equivalent to $\u{\CM}(R^\prime)$ for some Gorenstein ring $R^\prime$.
\end{remark}

We end by using our results to characterize those rational surfaces for which the category $\CM(R)$ contains an $n$-cluster tiliting object.  Recall that $M\in\CM(R)$ is called \emph{$n$-cluster tilting} (or \emph{maximal $(n-1)$-orthogonal}) for a positive integer $n$ \cite{I,KR} if
\begin{eqnarray*}
\add M&=&\{ X\in\CM(R) : \Ext^i_R(M,X)=0\ (0<i<n) \}\\
&=&\{ X\in\CM(R) : \Ext^i_R(X,M)=0\ (0<i<n)\}. 
\end{eqnarray*}
In this case, we have $\Ext^i_R(M,M)=0$ for any $0<i<n$ and $R\oplus\omega\in\add M$.
\begin{thm}
\t{(1)} $\CM(R)$ has a 1-cluster tilting object if and only if $R$ is a quotient singularity.\\
\t{(2)} $\CM(R)$ has a 2-cluster tilting object if and only if $R$ is regular or $R\cong k[[x,y]]^{\frac{1}{3}(1,1)}$ where $\frac{1}{3}(1,1)$ is the cyclic group of order 3 inside $\t{GL}(2,k)$ acting as $x\mapsto \varepsilon x$, $y\mapsto \varepsilon y$, where $\varepsilon$ is a cube root of unity.\\
\t{(3)} $\CM(R)$ has an $n$-cluster tilting object for some $n>2$ if and only if $R$ is regular.
\end{thm}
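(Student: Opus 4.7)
My plan handles the three parts in order of difficulty. Part (1) is immediate: a 1-cluster tilting object is by definition an additive generator of $\CM(R)$, so existence is equivalent to $\CM(R)$ having finite representation type, which by Auslander's theorem (as quoted in the introduction) is equivalent to $R$ being a quotient singularity.

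For part (3), suppose $n>2$ and $M$ is $n$-cluster tilting. Since $\Ext^i_R(R,-)=0$ for $i>0$, the defining equality $\add M=\{X:\Ext^i_R(X,M)=0,\ 0<i<n\}$ forces $R\in\add M$, whence $\Ext^1_R(M,R)=\Ext^2_R(M,R)=0$. If $R$ is non-Gorenstein, Proposition~\ref{SCM and OCM}(1) makes $M$ free, so $\add M=\add R$, giving $\CM(R)=\add R$ and forcing $R$ regular---a contradiction. If $R$ is Gorenstein but not regular, so ADE, the stable category $\underline{\CM}(R)$ is 1-Calabi--Yau, hence $\Ext^1_R(M,M)=0$ collapses $\underline{\End}_R(M)$ to zero, putting $M\in\add R$ and giving the same contradiction. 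The regular case is trivial.

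Part (2) carries the bulk of the work. The "if" direction treats $R$ regular trivially; for $R=k[[x,y]]^{\frac{1}{3}(1,1)}$ the three indecomposable CM modules are $R$, $\omega$, and one non-special $N$, and I would check directly that $M=R\oplus\omega$ is 2-cluster tilting by combining Lemma~\ref{non-2} (the exceptional curve is $(-3)$, so $\Ext^1_R(\omega,\omega)=0$) with the short exact sequence $0\to N\to R^2\to\omega\to0$ coming from $\Omega\omega\cong\omega^*=N$ via Theorem~\ref{characterization of SCM}(7), yielding $\Ext^1_R(\omega,N)\neq0$ to exclude $N$ from $\add M$. For the converse, assume $M$ is 2-cluster tilting: the Ext-vanishings $\Ext^1_R(R,-)=0=\Ext^1_R(-,\omega)$ on $\CM(R)$ place $R,\omega\in\add M$, from which $\add M\subseteq\SCM(R)$ and $\omega$ is special. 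If $R$ is Gorenstein non-regular, the 1-Calabi--Yau argument from part (3) applies. If $R$ is non-Gorenstein, then for each summand $M_i$ of $M$ the vanishing $\Ext^1_R(M_i,M_i)=0$ rules out $(-2)$-curves by Lemma~\ref{-2}, and Lemma~\ref{non-2} then makes every summand relatively projective in $\SCM(R)$. Hence $\Ext^1_R(M,Y)=0$ for every $Y\in\SCM(R)$, so $\SCM(R)\subseteq\add M$ and $\add M=\SCM(R)$. No indecomposable special corresponds to a $(-2)$-curve, so by Corollary~\ref{chern example}(2) the minimal resolution consists of a single $(-3)$-curve; the standard classification of cyclic quotient surface singularities via the Hirzebruch--Jung continued fraction $3=[3]$ then identifies $R$ as $k[[x,y]]^{\frac{1}{3}(1,1)}$.

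The main obstacle lies in the non-Gorenstein case of part (2): chaining together Lemma~\ref{-2}, Lemma~\ref{non-2}, and Corollary~\ref{chern example}(2) in the right order to force the minimal resolution into a single $(-3)$-configuration, together with the routine but delicate Ext-computation needed to verify that $R\oplus\omega$ is genuinely 2-cluster tilting in the $\frac{1}{3}(1,1)$ case.
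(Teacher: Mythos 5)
Your proposal is correct and follows essentially the same route as the paper: part (1) via Auslander's finite-type theorem, part (3) via Proposition~\ref{SCM and OCM}(1) in the non-Gorenstein case, and part (2) by forcing $R,\omega\in\add M$, showing $M$ is special, excluding $(-2)$-curves among its summands, and then using relative projectivity to get $\add M=\SCM(R)$ and hence a single $(-3)$-curve. The only substantive deviations are that in the Gorenstein sub-cases you invoke the $1$-Calabi--Yau property of $\u{\CM}(R)$ for ADE singularities (AR duality with $\tau\cong\mathrm{id}$) where the paper instead applies Lemma~\ref{-2} to the $(-2)$-curves directly---both are valid---and that your verification that $R\oplus\omega$ is $2$-cluster tilting for $\frac{1}{3}(1,1)$ is more explicit than the paper's ``by inspection.''
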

\begin{proof}
If $R$ is regular, then $\CM(R)=\add R$ and so $R$ is an $n$-cluster tilting object in $\CM(R)$ for any $n\geq1$. Hence we only need to consider the case when $R$ is not regular.\\
(1) By the Krull-Schmidt property, $\CM(R)$ has a 1-cluster tilting object if and only if $\CM(R)$ has finite type. By \cite{Auslander_rational} this is equivalent to $R$ being a quotient singularity.\\
(2) Let $M$ be a basic 2-cluster tilting object of $\CM(R)$. Since $R\in\add M$ and $\Ext^1_R(M,M)=0$, we have that $M$ is special.  Now since $\omega$ is a summand of $M$, this implies that $\omega$ is special.  Since $\Ext^1_R(M,M)=0$, by Lemma~\ref{-2} any non-free indecomposable summand of $M$ corresponds to a non-$(-2)$-curve.  In particular $R$ is not Gorenstein so by Corollary~\ref{chern example} the exceptional curve corresponding to $\omega$ is a $(-3)$-curve and all other exceptional curves are $(-2)$-curves.  This implies $M\cong R\oplus\omega$, so by Lemma~\ref{non-2} we have that $M$ is relatively projective in $\SCM(R)$.
Since $\Ext^1_R(M,\SCM(R))=0$, we have $\SCM(R)=\add M$.  Thus the minimal resolution of $\Spec R$ consists of only one $(-3)$-curve, so $R\cong k[[x,y]]^{\frac{1}{3}(1,1)}$ since quotient singularities are taut \cite[2.12]{Brieskorn}.  By inspection, in this case $R\oplus \omega$ is a 2-cluster tilting object.\\
(3) $\CM(R)$ does not have an $n$-cluster tilting object for $n>2$ by Proposition~\ref{SCM and OCM}(1) in the non-Gorenstein case, and by Lemma~\ref{-2} in the Gorenstein case.
\end{proof}

\end{document}